\newtheorem{lemma}{Lemma}[section]
\newtheorem{theorem}[lemma]{Theorem}
\newtheorem{cor}[lemma]{Corollary}
\newtheorem{prop}[lemma]{Proposition}
\newtheorem{defn}[lemma]{Definition}
\newtheorem{conj}[lemma]{Conjecture}
\newcommand{\Z}{\mathbb{Z}}
\newcommand{\Q}{\mathbb{Q}}
\newcommand{\R}{\mathbb{R}}
\newcommand{\CKh}{\operatorname{CKh}^{\pm}}
\title{A family of concordance homomorphisms from Khovanov homology}
\author{William Ballinger}
\begin{document}

\maketitle

\begin{abstract}
By considering a version of Khovanov homology incorporating both the Lee and $E(-1)$ differentials, we construct a $1$-parameter family of concordance homomorphisms similar to the Upsilon invariant from knot Floer homology. This invariant gives lower bounds on the slice genus and can be used to prove that certain infinite families of pretzel knots are linearly independent in the smooth concordance group. 
\end{abstract}

\section{Introduction}

The Upsilon invariant, introduced by Ozsv\'ath, Stipsicz, and Szab\'o in \cite{ozsvath2017concordance}, is defined in terms of the knot Floer homology of a knot and contains a great deal of information about the smooth concordance group. The definition, especially as formulated by Livingston \cite{livingston2017notes}, can be abstracted to avoid reference to any details of the construction of knot Floer homology, only relying on the fact that it associates a chain complex with two independent filtrations to any knot. This suggests an approach to creating new knot invariants by constructing a similar filtered complex in a different setting, which was used for example in \cite{lewark2019upsilon} to define knot invariants using a second filtration on a version of the Khovanov-Rozansky $\operatorname{sl}(n)$ homology. This paper will give a new way to define Upsilon-like invariants of knots, this time using the Lee and $E(-1)$ differentials on Khovanov ($\operatorname{sl}(2)$) homology. Most of the results here should extend to the setting of $\operatorname{sl}(n)$ homology, since analogues of the Lee and $E(-1)$ differentials are also present there. Making this extension would require adapting the invariance proofs from \cite{ballinger2020concordance} to the $\operatorname{sl}(n)$ setting, so we will focus on the $\operatorname{sl}(2)$ case for simplicity and ease of computation.

\begin{theorem}
There is a knot invariant $\Phi_K(\alpha)$ for $K$ a knot in $S^3$ and $\alpha \in [0,2]$ with the following properties:
\begin{itemize}
\item For fixed $\alpha$, $\Phi_K(\alpha)$ is a real-valued concordance homomorphism. 
\item For fixed $K$, $\Phi_K(\alpha)$ is a continuous piecewise linear function of $\alpha$. 
\item $|\Phi_K(\alpha)| \le 2g_4(K) \Lambda(\alpha)$, where $\Lambda(\alpha) = 1-|\alpha-1|$.
\end{itemize}
\end{theorem}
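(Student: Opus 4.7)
The plan is to follow Livingston's abstract formulation of $\Upsilon$ verbatim, applied to the bifiltered complex $\CKh(K)$. By construction, this complex carries two independent filtrations $\mathcal{F}^L$ and $\mathcal{F}^E$ coming from the Lee and $E(-1)$ deformations, in addition to the homological grading. For each $\alpha \in [0,2]$ I will form the weighted filtration $\mathcal{F}_\alpha = \alpha\,\mathcal{F}^L + (2-\alpha)\,\mathcal{F}^E$ and define $\Phi_K(\alpha)$ to be $-2$ times the minimum filtration level (measured by $\mathcal{F}_\alpha$) of a cycle representing the canonical Lee/Bar-Natan generator $\mathfrak{s}_o$ associated to an orientation of $K$.

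Two of the three conclusions should then be essentially formal. Piecewise linearity and continuity in $\alpha$ follow from the finite-dimensionality of $\CKh(K)$ by the same optimization argument used for the original $\Upsilon$ invariant. The concordance homomorphism statement splits into concordance invariance, which I expect to import from \cite{ballinger2020concordance} applied to the bifiltered complex rather than to either filtration alone (a concordance inducing a filtered chain homotopy equivalence that carries $\mathfrak{s}_o$ to $\mathfrak{s}_o$), together with additivity under connected sum, which should follow from a K\"unneth-type splitting $\CKh(K_1 \# K_2) \simeq \CKh(K_1) \otimes \CKh(K_2)$ in the bifiltered setting under which $\mathfrak{s}_o$ factors as the tensor product of the two orientation classes.

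The substantive content is the slice genus bound. Given a genus-$g$ smooth surface $\Sigma \subset B^4$ with $\partial\Sigma = K$, I will view $\Sigma$ as a cobordism from the unknot $U$ to $K$ and decompose it into a movie of elementary births, saddles, and deaths, with at most $2g+2$ saddles for a suitable choice of movie. Each elementary piece induces a chain map on $\CKh$ whose shift in each of $\mathcal{F}^L$ and $\mathcal{F}^E$ can be read directly from Bar-Natan's cobordism description of Khovanov homology. Combining the per-handle shifts into $\mathcal{F}_\alpha$ and optimizing, the net shift should be bounded by $g\,\Lambda(\alpha) = g\min(\alpha, 2-\alpha)$. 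Applying the same estimate to the reverse cobordism $K \leadsto U$ yields $|\Phi_K(\alpha) - \Phi_U(\alpha)| \le 2g\,\Lambda(\alpha)$, and $\Phi_U(\alpha) = 0$ follows directly from the definition.

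The main obstacle is the precise filtration-shift bookkeeping in the last step: I will have to verify that the distinguished class $\mathfrak{s}_o$ is genuinely transported (not killed) by the chosen cobordism maps, and that the interplay between the Lee and $E(-1)$ differentials moves $\mathcal{F}^L$ and $\mathcal{F}^E$ in opposite senses in just the right proportions so that the optimized per-saddle bound on $\mathcal{F}_\alpha$ is exactly $\Lambda(\alpha) = 1-|\alpha-1|$ rather than a larger constant. This is where the specific construction of $\CKh$ from \cite{ballinger2020concordance}, and the compatibility of both differentials with both filtrations, must enter the argument in an essential way.
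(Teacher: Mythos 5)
Your setup of the invariant and the formal properties (piecewise linearity, continuity, additivity under connected sum, concordance invariance) match the paper's approach: the paper defines $\Phi_K(\alpha) = M(\CKh(K),\alpha)$ where $M$ is a filtration level of a homology generator for the collapsed $\R$-filtration, proves additivity via the K\"unneth splitting $\CKh(K_1\#K_2) \cong \CKh(K_1)\otimes_\Q \CKh(K_2)$ together with Lemma~\ref{additivity}, proves concordance invariance and crossing-change monotonicity via Lemma~\ref{boundingmap}, and proves piecewise linearity from finite-dimensionality. These are all fine, although the paper works with an arbitrary cycle representing the one-dimensional homology rather than naming a canonical Lee generator; the difference is immaterial.

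The slice genus bound is where your plan breaks down, and the gap is exactly the one you flagged as ``the main obstacle,'' but it is not a bookkeeping issue --- it is structural. The cobordism maps on $\CKh$ coming from Proposition~\ref{elementaryprop} send $\CKh_{i,j}(K)$ into $\CKh_{i-2g,j}(K')$: they shift the first (internal-grading) filtration by $-2g$ and \emph{preserve} the second (homological-grading) filtration exactly. There is no ``opposite sense'' motion between the two filtrations to exploit, because birth, saddle, and death cobordisms all fix $h$ and only move $q$. Plugging this into the collapse $C^\alpha_t = \sum_{\alpha m + (2-\alpha)n \ge t} C_{m,n}$ gives a shift of $-2g\alpha$ in $\mathcal{F}_\alpha$, which yields only $|\Phi_K(\alpha)| \le 2g_4(K)\,\alpha$. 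This coincides with the claimed bound when $\alpha \le 1$ but is strictly weaker than $2g_4(K)\Lambda(\alpha)$ when $\alpha > 1$; the paper says this explicitly at the start of its slice-genus section. No amount of optimizing per-handle shifts can rescue this, because the whole cobordism moves only one of the two filtrations.

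The paper's actual proof of the stronger bound is a genuinely different argument. It isotopes $K$ to the boundary of a ribbon surface $\Sigma$ of genus $g$, introduces the knots $K_n = \partial\Sigma_n$ obtained by inserting $n$ full positive twists into each band, and uses Proposition~\ref{crossingchange} to show $\Phi_{K_n}(\alpha)$ is monotone decreasing in $n$. The heart of the argument (Corollary~\ref{limitval}) shows that for large $n$ the reduced Khovanov homology of $K_n$ becomes supported in delta grading $-2g$ in any finite range of homological gradings --- this uses Willis's stability theorem for Khovanov homology under twisting and a thinness computation for connected sums of pretzel knots --- so by Theorem~\ref{homologyconstraint} the graph of $\Phi_{K_n}$ is pinned to the lines through $(1,-2g)$ away from $\alpha = 1$, forcing $\lim_n \Phi_{K_n}(\alpha) = -2g\Lambda(\alpha)$. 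Then $\Phi_K(\alpha) = \Phi_{K_0}(\alpha) \ge -2g\Lambda(\alpha)$, and the mirror gives the other inequality. You would need an argument of this kind (or some other trick that goes beyond tracking filtration shifts under cobordism maps) to complete the proof.
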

The above properties are all shared with $\Upsilon$, but notably there is no analogue of the symmetry $\Upsilon_K(t) = \Upsilon_K(2-t)$. Another property shared with $\Upsilon$ is that the lines forming the graph of $\Phi_K$ are constrained by the Khovanov homology of $K$:
\begin{theorem}\label{homologyconstraint}
Suppose $L$ is a line meeting the graph of $\Phi_K$ in a segment, and let $\ell_0$ and $\ell_1$ be the second coordinates of the intersections of $L$ with the vertical lines $\alpha = 0$ and $\alpha = 1$, respectively. Then the reduced Khovanov homology of $K$ is nonzero in homological grading $\ell_0/2$ and delta grading $\ell_1$. In particular, $\ell_0$ and $\ell_1$ are even integers.
\end{theorem}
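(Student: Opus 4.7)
The plan is to extract the line $L$ from a specific optimal representative in the chain complex defining $\Phi_K$ and then show that this representative descends to the claimed class in reduced Khovanov homology. Following Livingston's formulation of $\Upsilon$, I expect $\Phi_K(\alpha)$ to be defined as an extremum, over cycle representatives $x$ of a distinguished class in the homology of the deformed Khovanov complex (incorporating the Lee and $E(-1)$ differentials), of a linear functional in the bigrading of $x$ that specialises at $\alpha = 0$ to twice the homological grading and at $\alpha = 1$ to the delta grading. The graph of $\Phi_K$ is then the envelope of a finite family of lines $L_x$, one per extremal representative.

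First I would check that any line $L$ supporting a complete segment of the graph must be of the form $L_x$ for a single representative $x$ which remains extremal throughout that segment. The $y$-intercepts then read off directly from the linear functional as $\ell_0 = 2\cdot(\text{homological grading of }x)$ and $\ell_1 = (\text{delta grading of }x)$, so the integrality of $\ell_0$ and $\ell_1$ is immediate from integrality of the Khovanov bigrading.

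The substantive step is to promote $x$ from a mere cycle in the deformed complex to a nonzero class in $\mathrm{Kh}_{\mathrm{red}}(K)$ at homological grading $\ell_0/2$ and delta grading $\ell_1$. The natural tool is the spectral sequence from reduced Khovanov homology to the deformed homology, whose differentials strictly lower the quantum grading; the $E^1$ page is $\mathrm{Kh}_{\mathrm{red}}(K)$. Because $x$ was chosen to be extremal with respect to the filtration encoded by the linear functional, its leading term on $E^1$ cannot vanish, and this leading term must then sit in the bidegree claimed.

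The main obstacle will be the bookkeeping in this last step: one must verify that a single leading term simultaneously realises both gradings picked out by the line $L$, rather than having the extremiser at $\alpha = 0$ differ from the one at $\alpha = 1$, and that the resulting class on the $E^1$ page is not killed by a subsequent differential in the spectral sequence. Carefully selecting $x$ among the filtration-minimal representatives of the distinguished homology class, and using the fact that $L$ supports the graph on an entire segment (so the minimum in the linear functional is achieved by the same $x$ at every $\alpha$ in an open interval), should force both issues to resolve in the required way.
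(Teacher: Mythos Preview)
Your sketch captures the right intuition---that each linear segment of the graph of $\Phi_K$ is governed by a single bigrading in Khovanov homology---but the paper reaches this conclusion by a different and cleaner route than the one you outline.

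Rather than working with an optimal cycle $x$ in the full deformed complex and then trying to argue that its leading term survives in the spectral sequence, the paper first replaces $\CKh(K)$ by a homotopy-equivalent finite-dimensional model whose associated bigraded module is \emph{already} the reduced Khovanov homology (this is Lemma~\ref{fullsimplify}, obtained by two applications of a homological perturbation step, Lemma~\ref{spectralstep}). Once this replacement is made, a general proposition about $M(C,\alpha)$ for finite-dimensional semibifiltered complexes shows that the graph of $M(C,-)$ is contained in the union of lines through $(0,2j)$ and $(2,2i)$ as $(i,j)$ ranges over the support of the associated graded module of $C$. Since that support is now literally the support of $\operatorname{Kh}_{\mathrm{red}}(K)$, the theorem follows immediately; there is no spectral-sequence survival argument left to make.

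The difficulty you flag as the ``main obstacle''---ensuring that the leading term of your optimal $x$ is not merely a Khovanov cycle but a nonzero Khovanov homology class---is genuine, and your proposal does not actually resolve it. Extremality of $x$ for the collapsed $\alpha$-filtration tells you the leading term is nonzero in the associated graded \emph{module}, but if that module is only the Khovanov chain complex there is no immediate reason the class cannot be a boundary. One can try to correct $x$ by subtracting $d(y)$ for a suitable lift $y$, but making this work requires exactly the kind of inductive cancellation that the homological perturbation lemma packages. So your approach, if carried through carefully, would end up re-deriving Lemma~\ref{fullsimplify}; the paper simply front-loads that step and thereby reduces the theorem to a one-line observation about supports.
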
 

For some simple knots $K$, the function $\Phi_K$ can be computed by directly examining the Khovanov homology. In particular, this can be done for the three stranded pretzel knots based on computations of their reduced Khovanov homology due to Manion \cite{manion2018khovanov}, leading to conclusions about their place in the smooth concordance group.
\begin{theorem}\label{pretzelindep}
For any two sequences $a_k,b_k$ of odd integers with $2k < a_k < b_k$, the sequence of knots $\{P(-2k,a_k,b_k)\}_{k=1}^\infty$ are a basis for a $\Z^\infty$ summand of the smooth concordance group. 
\end{theorem}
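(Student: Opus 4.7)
My approach is to extract $\mathbb{Z}$-valued concordance homomorphisms from the invariant $\Phi$ by taking slope jumps at carefully chosen points of $[0,2]$. The overall goal: for each $k$ locate an $\alpha_k \in (0,2)$ that is a singular point (slope change) of $\Phi_{P(-2k,a_k,b_k)}$ but at which the graphs of the other pretzels in the family are smooth, so that the jump functional at $\alpha_k$ detects exactly the $k$-th pretzel knot.

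First I would use Manion's computation of the reduced Khovanov homology of three-stranded pretzels, together with Theorem \ref{homologyconstraint}, to compute $\Phi_{P(-2k,a_k,b_k)}$ explicitly. Theorem \ref{homologyconstraint} constrains the line underlying each segment of the graph to have intercepts $(\ell_0,\ell_1) \in 2\mathbb{Z} \times 2\mathbb{Z}$ realizing a nonzero bigrading of $\widetilde{\mathrm{Kh}}$; combined with piecewise linearity, the slope bound $|\Phi_K(\alpha)| \le 2g_4(K)\Lambda(\alpha)$, and the finite support of Manion's groups, this should pin the graph down up to finitely many candidates and let me read off its vertices. The hypothesis $2k < a_k < b_k$ is there to ensure that the $-2k$ tassel produces a distinguished vertex $\alpha_k$ depending only on $k$, with contributions from $a_k$ and $b_k$ confined to other regions of $\alpha$.

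Second, given such $\alpha_k$, set
\[
\psi_k(K) := \Phi'_K(\alpha_k^+) - \Phi'_K(\alpha_k^-),
\]
the one-sided slope jump at $\alpha_k$. Since $\Phi_K$ is a concordance homomorphism in $K$ and the slopes of the supporting lines are even integers (by Theorem \ref{homologyconstraint}, the slope of the line $L$ through a segment equals $\ell_1 - \ell_0 \in 2\mathbb{Z}$), each $\psi_k$ is a $2\mathbb{Z}$-valued concordance homomorphism. By construction $\psi_k(P(-2k,a_k,b_k)) \neq 0$ while $\psi_k(P(-2j,a_j,b_j)) = 0$ for $j > k$, yielding an upper-triangular relation and hence linear independence of the pretzels in the smooth concordance group.

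To promote linear independence to a $\mathbb{Z}^\infty$ summand, note that for any knot $K$ the reduced Khovanov homology $\widetilde{\mathrm{Kh}}(K)$ is finitely generated, so by Theorem \ref{homologyconstraint} the graph of $\Phi_K$ has only finitely many vertices, and $\psi_k(K) = 0$ for all but finitely many $k$. The combined map $\Psi := (\psi_k)_{k\ge 1}$ therefore sends the concordance group into $\bigoplus_k \mathbb{Z}$; on the span of the pretzels it is represented by an upper-triangular integer matrix with nonzero diagonal entries, and a standard triangular change of basis on the target $\bigoplus_k \mathbb{Z}$ yields a retraction onto this span, exhibiting the desired splitting. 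The main obstacle is the first step: extracting from Manion's Khovanov tables the precise vertex locations of $\Phi_{P(-2k,a_k,b_k)}$ and verifying that the distinguishing singular point $\alpha_k$ can be chosen to depend only on $k$ across the whole two-parameter family, so that the upper triangularity required for $j > k$ genuinely holds.
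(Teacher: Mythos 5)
Your overall strategy matches the paper's: compute $\Phi_{P(-2k,a_k,b_k)}$ from Manion's Khovanov tables, locate a singular point $\alpha_k$ that depends only on $k$ (the paper finds $\alpha_k = \tfrac{8k}{8k+2}$, with the singularity there independent of $a_k,b_k$), and use the slope-jump functionals at the $\alpha_k$ as detecting homomorphisms. The paper in fact shows the resulting matrix is diagonal, not merely triangular, since $\Phi_{P(-2n,a,b)}$ is singular only at $\alpha = 1$ and $\alpha = \tfrac{8n}{8n+2}$.

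However, there is a genuine gap in your passage from linear independence to a $\Z^\infty$ \emph{summand}. Your functionals $\psi_k$ take value $\Delta\Phi'_{P(-2k,a_k,b_k)}(\alpha_k) = -(8k+2)$ on the $k$-th pretzel, so the image of the span of the pretzels under $\Psi = (\psi_k)_k$ is (essentially) $\bigoplus_k (8k+2)\Z \subset \bigoplus_k \Z$, which is not a direct summand: its quotient is the torsion group $\bigoplus_k \Z/(8k+2)\Z$. No ``triangular change of basis'' in $\operatorname{GL}(\bigoplus\Z)$ can fix this, because a unimodular change of basis preserves the elementary divisors of the image. The observation that slope jumps lie in $2\Z$ (since each supporting line has even slope $\ell_1 - \ell_0$) is too weak to repair this. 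What is needed is the stronger integrality statement (Lemma~\ref{deltaintegral} in the paper): for any knot $K$ and any $\alpha$, the quantity $(1-\alpha)\,\Delta\Phi'_K(\alpha)$ is an even integer, because it equals the difference of two $\delta$-gradings ($\ell_1$-values) in the support of $\widetilde{\operatorname{Kh}}(K)$. Specializing to $\alpha_k = \tfrac{8k}{8k+2}$, where $1-\alpha_k = \tfrac{2}{8k+2}$, this shows $\Delta\Phi'_K(\alpha_k)$ is divisible by $8k+2$ for \emph{every} $K$, so the normalized functional $i_k(K) = \Delta\Phi'_K(\alpha_k)/(8k+2)$ is a $\Z$-valued concordance homomorphism with $i_k(P(-2j,a_j,b_j)) = -\delta_{jk}$. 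Only after this normalization does the map $(i_k)_k$ carry the span of the pretzels isomorphically onto a free summand and give the desired retraction.

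A secondary remark: you flag ``extracting the precise vertex locations'' as the main obstacle, but in the paper's treatment that part goes through cleanly once one notes that Manion's homology for these pretzels forces a unique bifiltered model (Proposition~\ref{pretzelhomology}). The real obstacle is the integrality needed for the summand, which your sketch does not address.
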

None of the knots appearing in this theorem are algebraically slice, and it seems likely that this theorem, and even stronger statements, could be proven using other concordance invariants such as the Tristam-Levine signature. However, this would require a detailed analysis of the roots of the Alexander polynomials of these knots while, once the basic properties of the invariant are set up and the Khovanov homology of these knots is known, finding their $\Phi$ invariants is a fairly simple calculation.

The invariant $\Phi_K$ can also be calculated for small torus knots using their known Khovanov homologies (taken from \cite{bar2005knot}). Despite the complexity of these homology groups, a relatively simple pattern emerges which leads to Conjecture~\ref{torusvalues} on the values of $\Phi_{T_{p,q}}$ for any $p,q$.

\section{A bifiltered complex from a knot}

In \cite{ballinger2020concordance}, for every knot $K$ a $\Z$-graded, $\Z$-filtered chain complex over $\Z[x]$ $C(K)$ was defined. In this complex, the differential lowers the grading by $3$ and multiplication by $x$ lowers it by $2$. The following properties of $C(K)$, proven in \cite{ballinger2020concordance}, will be relavant here:
\begin{prop}\label{elementaryE-1}
Forgetting the filtration, the homology of $C(K)$ is isomorphic to $\Z[x]$, and the $E_2$ page of the spectral sequence coming from $C(K)$ is the $\mathcal{F}_3$ Khovanov homology of $K$ as defined in \cite{khovanov2004link}. If $K$ and $K'$ are related by a cobordism of genus $g$, there is a filtered chain map $C(K) \to C(K')$ whose induced map on homology is multiplication by $(2x)^g$, and if $K_-$ and $K_+$ are related by a crossing change in which $K_-$ has a negative crossing and $K_+$ a positive crossing, there is a filtered chain map $C(K_-) \to C(K_+)$ inducing an isomorphism on homology.
\end{prop}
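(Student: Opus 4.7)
Since Proposition~\ref{elementaryE-1} is attributed to the prior paper \cite{ballinger2020concordance}, the plan is really to outline how one would extract each clause from the construction used there. The complex $C(K)$ is built from the Khovanov cube of resolutions equipped with a differential that combines the usual $\mathcal{F}_3$ Khovanov differential with an $E(-1)$-type perturbation scaled by $x$, and the filtration records the $x$-power involved. The first move is therefore to write down this model carefully so that the two pieces of the differential can be separated by the filtration.

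For the homology and $E_2$ statements, I would analyze the spectral sequence of the filtration. The $E_0$ differential is exactly the $\mathcal{F}_3$-Khovanov differential on the cube of resolutions, so the $E_1$ page is the $\mathcal{F}_3$ chain complex and the $E_2$ page is the $\mathcal{F}_3$ Khovanov homology of $K$, matching Khovanov's construction in \cite{khovanov2004link}. To identify the unfiltered homology with $\Z[x]$ I would mimic Lee's canonical-generator argument: orient $K$ to produce canonical cycles, and use a deformation retract (or direct comparison with the free module generated by orientations) to show these generate the homology as a $\Z[x]$-module. Because the knot is connected, the two orientation classes assemble into a single $\Z[x]$-summand.

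For the cobordism part, I would decompose a genus-$g$ cobordism from $K$ to $K'$ into elementary pieces using a movie presentation (births, deaths, saddles, Reidemeister moves), assign to each elementary piece the standard Khovanov-style filtered chain map, and compose. The computation of the induced map on homology reduces to tracking the canonical generators: each genus-raising saddle multiplies the canonical cycles by a factor of $x$ (the characteristic feature of the $E(-1)$ deformation, analogous to the scalar $2$ appearing in Lee's theorem), while the orientation-doubling contributes a factor of $2$ per handle, yielding the claimed multiplication by $(2x)^g$. The genuine hard part of the plan is checking movie-move invariance at the filtered level, since one must show that the standard Khovanov chain homotopies used for each Carter--Saito move can be promoted to filtered homotopies in the presence of the $x$-perturbation; this is exactly the content of the invariance theorem in \cite{ballinger2020concordance}.

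The crossing-change clause I would deduce from the oriented skein long exact triangle, which at the chain level is a mapping cone relating $C(K_-)$ and $C(K_+)$ through the oriented resolution. The skein map $C(K_-)\to C(K_+)$ is filtered by construction, and to see it is an isomorphism on unfiltered homology it suffices to evaluate it on canonical generators of $\Z[x]\cong H_*(C(K_\pm))$, which it preserves up to a unit because the oriented resolution has the same number of components with compatible orientations on both sides. The main obstacle across the whole proposition, then, remains verifying that the $E(-1)$ perturbation does not disturb the filtered homotopies underlying functoriality; once that is in hand the generator-tracking computations are formal.
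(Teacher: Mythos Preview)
The paper does not give its own proof of Proposition~\ref{elementaryE-1}: the sentence immediately preceding the statement says these properties are ``proven in \cite{ballinger2020concordance}'', and the proposition is simply quoted as input. So there is no in-paper argument to compare your proposal against; you are effectively sketching what the cited reference does.

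Your outline is broadly reasonable as such a sketch, but a couple of points deserve tightening. First, your description of the spectral sequence pages is slightly garbled: with the homological filtration on the cube, the $E_0$ differential is the internal (vertex) differential of the Frobenius algebra, the $E_1$ page is the $\mathcal{F}_3$ Khovanov chain complex with the edge maps as differential, and the $E_2$ page is its homology; saying ``the $E_0$ differential is exactly the $\mathcal{F}_3$-Khovanov differential'' conflates these steps. Second, for the crossing-change map, the cited paper does not argue via a skein triangle and generator-tracking as you propose; it instead builds the map directly from a genus-one cobordism (a saddle followed by a Reidemeister~I), and the point is that this particular composite is filtered even though a generic genus-one cobordism map would shift the filtration. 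Your skein-triangle approach could be made to work, but you would need to be careful that the map you extract really is filtered with no shift, which is not automatic from the cone description. Otherwise your identification of the hard step---promoting the Carter--Saito homotopies to filtered ones in the presence of the perturbation---is exactly the substantive content of \cite{ballinger2020concordance}.
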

The above identification between the $E_2$ page and the Khovanov homology relates the filtration on $C(K)$ to the homological grading, and the grading on $C(K)$ to the linear combination $q-3h$ of the quantum and homological grading. We will call $q-3h$ the internal grading on Khovanov homology, just as in \cite{ballinger2020concordance}.

This paper will be concerned with the quotient of $C(K)$ setting the variable $x$ to $1$. To make the resulting invariants better behaved, we will also change base to $\Q$, although any field of characteristic not $2$ would work just as well and produce a different invariant. The result is a bifiltered chain complex incorporating both the Lee (or $d_1$) and $d_{-1}$ differentials into Khovanov homology. 
\begin{defn}
For a knot $K$, let $\CKh(K)$ be the chain complex $(C(K) \otimes \Q)/(x-1)$, and for integers $i,j$ let $\CKh(K)_{i,j}(K)$ be the subgroup of $\CKh(K)$ generated by the images of elements of $C(K)$ in grading at least $i$ and filtration level at least $j$.
\end{defn}
Since $x$ has degree $2$, in addition to the two filtrations $\CKh(K)$ inherits a $\Z/2\Z$ grading from the grading on $C(K)$. This will not be needed for establishing the formal properties of the invariant, but can be useful in computation. This complex has the following elementary properties, immediately from Proposition~\ref{elementaryE-1}:
\begin{prop}\label{elementaryprop}
The differential on $\CKh(K)$ carries $\CKh(K)_{i,j}(K)$ into $\CKh(K)_{i-3,j}(K)$, and the homology of $\CKh(K)$ with respect to this differential is isomorphic to $\Q$. If $K$ and $K'$ are related by a cobordism of genus $g$, then there is a chain map $\CKh(K) \to \CKh(K')$ that sends $\CKh_{i,j}(K)$ into $\CKh_{i-2g,j}(K')$ and induces an isomorphism on homology. If $K_-$ and $K_+$ are related by a crossing change in which $K_-$ has a negative crossing and $K_+$ a positive crossing, there is a filtered chain map $\CKh(K_-) \to \CKh(K_+)$ inducing an isomorphism on homology. 
\end{prop}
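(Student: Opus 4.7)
The plan is to derive each of the four statements in the proposition as an immediate consequence of Proposition~\ref{elementaryE-1} applied to $C(K)$, by tracking how gradings and filtrations behave under the two operations $-\otimes\Q$ and $-/(x-1)$. I would first dispose of the differential statement: the differential $d$ on $C(K)$ lowers grading by $3$ and preserves filtration, so its descent to $\CKh(K)$ sends the image of any element of bidegree $(i,j)$ in $C(K)\otimes\Q$ to the image of an element of bidegree $(i-3,j)$, which is exactly the assertion about $\CKh(K)_{i,j}$.

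For the homology computation I would proceed via the short exact sequence
\[
0\longrightarrow C(K)\otimes\Q\xrightarrow{\,x-1\,}C(K)\otimes\Q\longrightarrow\CKh(K)\longrightarrow 0.
\]
Granting that $x-1$ acts injectively on $C(K)\otimes\Q$, the associated long exact sequence in homology becomes
\[
\cdots\longrightarrow \Q[x]\xrightarrow{\,x-1\,}\Q[x]\longrightarrow H_*(\CKh(K))\longrightarrow\cdots,
\]
and the injectivity of $x-1$ on $\Q[x]$ identifies $H_*(\CKh(K))$ with the cokernel $\Q[x]/(x-1)\cong\Q$.

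The cobordism and crossing-change maps are treated in parallel. I would take the filtered chain map $C(K)\to C(K')$ provided by Proposition~\ref{elementaryE-1}, tensor with $\Q$, and descend to the quotient by $(x-1)$ to obtain a map $\CKh(K)\to\CKh(K')$; the grading shift of $2g$ in the proposition is then exactly the degree of $x^g$ on $C$, and the induced map on homology $\Q\to\Q$ is multiplication by $2^g$, an isomorphism. The crossing-change case is identical, except that the induced map on $\Q[x]$ is already a $\Q[x]$-module isomorphism and hence remains an isomorphism after setting $x=1$.

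The main obstacle in this plan is verifying that $x-1$ really is injective on $C(K)\otimes\Q$, which is what makes the displayed short exact sequence exact on the left. This should follow from the structure of $C(K)$ as a free (or at least torsion-free) $\Z[x]$-module, a property built into the construction of $C(K)$ given in \cite{ballinger2020concordance}; so in practice this obstacle should amount to citing that construction rather than doing new work.
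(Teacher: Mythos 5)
Your proposal is correct, and it supplies exactly the routine verifications the paper elides by declaring the proposition to follow ``immediately'' from Proposition~\ref{elementaryE-1}: descending the differential, the cobordism map, and the crossing-change map through $-\otimes\Q$ and $-/(x-1)$, and identifying $H_*(\CKh(K))$ with $\Q[x]/(x-1)\cong\Q$ via the change-of-rings exact triangle for the short exact sequence $0\to C(K)\otimes\Q\xrightarrow{x-1}C(K)\otimes\Q\to\CKh(K)\to 0$ (using that $C(K)$ is free, hence torsion-free, over $\Z[x]$ and that $\Q$ is flat over $\Z$). This is the same route the paper implicitly has in mind; no substantive difference.
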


Since the differential on $\CKh(K)$ lowers the first of the filtrations, this isn't quite a filtered chain complex in the usual sense. However, most of what one might like to do to a filtered chain complex can still be done here. In particular, collapsing the two filtrations to one along a one-parameter family of directions produces a family of concordance homomorphisms $\Phi_K(\alpha)$ for $\alpha \in [0,2]$ very similar to the $\Upsilon$ invariant from Heegaard Floer homology. 

\begin{theorem}
For each $\alpha \in [0,2]$, $\Phi_K(\alpha)$ is a real valued concordance homomorphism, and $\Phi_K$ is a piecewise linear function of $K$. At the endpoints, $\Phi_K(0) = \Phi_K(2) = 0$, and the slopes of $\Phi_K$ at $0$ and $2$ are $s(K)$ and $-t(K)$, respectively.
\end{theorem}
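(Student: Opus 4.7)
My plan is to follow the Upsilon-style construction, adapted from Livingston's reformulation of the Ozsv\'ath--Stipsicz--Szab\'o definition. For each $\alpha \in [0,2]$ I would collapse the two filtrations on $\CKh(K)$ into a single $\R$-valued filtration $F_\alpha$ by taking an $\alpha$-dependent weighted combination of the bigrading $(i,j)$, with weights chosen so that $F_\alpha$ is nonincreasing under the differential for all $\alpha \in [0,2]$ (the differential lowers $i$ by $3$ and preserves $j$, so any nonnegative combination works). Since $H_*(\CKh(K)) \cong \Q$ by Proposition~\ref{elementaryprop}, there is a canonical generator class up to scale, and I would define $\Phi_K(\alpha)$, up to a universal normalization, as the maximum of $F_\alpha$ over cycles representing this generator, rescaled and shifted so that $\Phi_K$ vanishes on the unknot and at the endpoints $\alpha = 0, 2$.

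Concordance invariance and the slice-genus bound follow from Proposition~\ref{elementaryprop}: a genus-$g$ cobordism from $K$ to $K'$ furnishes chain maps in both directions that are isomorphisms on $H_*$ and shift the $i$-filtration by at most $2g$ while preserving $j$. Tracing this through the definition of $F_\alpha$ yields $|\Phi_K(\alpha) - \Phi_{K'}(\alpha)| \le 2g\Lambda(\alpha)$ with $\Lambda(\alpha) = 1 - |\alpha - 1|$; the $g=0$ case is concordance invariance, and the general case is the slice-genus bound from the first theorem. The homomorphism property requires a bifiltered K\"unneth-type formula $\CKh(K_1 \# K_2) \simeq \CKh(K_1) \otimes_\Q \CKh(K_2)$, together with a mirror duality $\CKh(-K) \simeq \CKh(K)^*$ needed to get $\Phi_{-K} = -\Phi_K$; these should lift the classical statements for Khovanov homology through the $E(-1)$ deformation of \cite{ballinger2020concordance}, and I expect this to be the main technical step.

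Piecewise linearity in $\alpha$ is essentially formal. After passing to a minimal model, the generator class is representable by cycles supported in finitely many bigradings, so $\Phi_K(\alpha)$ is an extremum of finitely many affine-linear functions of $\alpha$, hence continuous and piecewise linear on $[0,2]$.

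Finally, at $\alpha = 0$ the filtration $F_0$ is a rescaling of $i$ alone, so $\Phi_K(0)$ depends only on the position of the generator in the $i$-direction, and the normalization is arranged so this value is $0$. The slope $\Phi_K'(0)$ then picks up, at first order in $\alpha$, the $j$-coordinate of a maximum-$i$ cycle representative of the generator; since the Lee deformation is present in $\CKh(K)$, this slope is governed by the Rasmussen spectral sequence and works out to $s(K)$. The argument at $\alpha = 2$ is symmetric with the roles of $i$ and $j$ (and of Lee and $E(-1)$) swapped, yielding slope $-t(K)$ for the $E(-1)$-invariant of \cite{ballinger2020concordance}. Matching normalization constants so that these slopes come out to exactly $s(K)$ and $-t(K)$ without spurious integer factors is the last bookkeeping detail.
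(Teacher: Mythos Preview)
Your overall strategy---collapsing the bifiltration along a line, taking the maximal filtration level of a generating cycle, invoking a K\"unneth formula for additivity, and extracting piecewise linearity from a finite model---matches the paper's approach exactly. A few points deserve correction, however.

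First, the mirror duality $\CKh(-K) \simeq \CKh(K)^*$ is not needed, and is certainly not ``the main technical step.'' Once you have additivity from K\"unneth and concordance invariance from the genus-$0$ cobordism maps, the relation $\Phi_{-K}(\alpha) = -\Phi_K(\alpha)$ follows formally because $K \# (-K)$ is slice. The paper never appeals to duality.

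Second, and more substantively, the vanishing $\Phi_K(0) = \Phi_K(2) = 0$ is a theorem, not a normalization. The paper simply sets $\Phi_K(\alpha) = M(\CKh(K),\alpha)$ with no rescaling or shift, and the endpoint vanishing must then be \emph{proved}. The argument runs as follows: at $\alpha = 0$ the collapsed filtration is (twice) the homological filtration $j$, and the vertical associated graded of the finite model from Lemma~\ref{fullsimplify} is precisely the Lee spectral sequence, whose surviving generator sits in homological degree $0$ and internal degree $s(K)$. A step-by-step lifting through the $j$-filtration then produces an honest cycle in $\CKh(K)_{s(K),0} + \CKh(K)_{m,1}$ generating the homology, which gives $\Phi_K(\alpha) \ge s(K)\alpha$ for small $\alpha$; the matching upper bound comes from applying the same argument to $-K$ and using the homomorphism property. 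So the endpoint value and the slope are obtained together from the structure of the Lee (resp.\ $E(-1)$) spectral sequence, and your phrase ``the normalization is arranged so this value is $0$'' skips exactly this content. (Your informal description of the slope mechanism is otherwise on target, though you have the roles of $i$ and $j$ at the two endpoints reversed relative to the paper's convention $C^\alpha_t = \sum_{\alpha m + (2-\alpha)n \ge t} C_{m,n}$.)

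Finally, an aside on your slice-genus remark: the cobordism maps of Proposition~\ref{elementaryprop} shift only the first filtration, by $2g$, so tracing them through $F_\alpha$ yields $|\Phi_K - \Phi_{K'}| \le 2g\alpha$, not $2g\Lambda(\alpha)$. The paper notes this explicitly and devotes a separate section to the sharper bound, using a limiting argument with twisted ribbon surfaces and crossing-change inequalities. This does not affect the theorem at hand, but the discrepancy is worth knowing.
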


Just like $\Upsilon$, the invariant $\Phi$ gives lower bounds on the slice genus of $K$. 
\begin{theorem}\label{slicegenus}
\begin{equation*}
|\Phi(K,\alpha)| \le 2g_4(K) \min(\alpha, 2-\alpha)
\end{equation*}
\end{theorem}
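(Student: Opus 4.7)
The plan is to derive the bound as a direct consequence of the cobordism property in Proposition~\ref{elementaryprop}. Let $g = g_4(K)$. Starting from a genus-$g$ slice surface for $K$ in $B^4$ and removing a small open disk produces a cobordism $\Sigma\colon K \to U$ of genus $g$ to the unknot $U$, and its reverse $\bar\Sigma\colon U \to K$ also has genus $g$. Proposition~\ref{elementaryprop} then supplies chain maps
\[
\phi\colon \CKh(K) \to \CKh(U), \qquad \bar\phi\colon \CKh(U) \to \CKh(K),
\]
each sending $\CKh_{i,j}$ into $\CKh_{i-2g,j}$ and each inducing an isomorphism on the one-dimensional homology (multiplication by $2^g$ on $\Q$).

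Since $\Phi_K(\alpha)$ is obtained by collapsing the bifiltration $(i,j)$ on $\CKh(K)$ to a single $\alpha$-dependent filtration and recording the level at which the generator of the homology $\Q$ first appears, the next step is to track how the shift $(i,j) \mapsto (i-2g, j)$ acts on this collapsed level. The weight of the $i$-coordinate in the collapse must be $\min(\alpha, 2-\alpha)$: at $\alpha = 0, 2$ one has $\Phi_K = 0$, while the endpoint slopes $s(K)$ and $-t(K)$ are already known to satisfy $|s|,|t| \le 2g$, and the piecewise linear interpolation of the weight is then forced. Hence the $i$-shift of $-2g$ from Proposition~\ref{elementaryprop} causes a shift of at most $2g\min(\alpha, 2-\alpha)$ in the collapsed level. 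Using $\Phi_U(\alpha) \equiv 0$—immediate since $\CKh(U)$ has homology concentrated at the origin of the bifiltration—one of $\phi, \bar\phi$ produces the upper bound $\Phi_K(\alpha) \le 2g\min(\alpha, 2-\alpha)$ and the other the lower bound $\Phi_K(\alpha) \ge -2g\min(\alpha, 2-\alpha)$, together yielding the stated absolute value bound.

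The delicate point is verifying that the two cobordism maps give complementary bounds rather than redundant ones. Since both $\phi$ and $\bar\phi$ have the same $i$-filtration degree $-2g$, a naive comparison would push $\Phi_K$ and $\Phi_U$ in the same direction in both cases, producing a contradiction instead of a two-sided estimate; the resolution must be built into the precise definition of the collapsed filtration—for instance, $\Phi_K(\alpha)$ may be defined via two different one-parameter directions on $[0,1]$ and on $[1,2]$, corresponding respectively to the Lee ($d_1$) and $E(-1)$ ($d_{-1}$) spectral sequences, so that each cobordism map is used to bound one side of $|\Phi_K(\alpha)|$ on each subinterval. Once this aspect of the definition is in hand, the rest reduces to the direct bookkeeping sketched above.
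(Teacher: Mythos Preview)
Your proposal has a genuine gap. The weight of the $i$-coordinate in the collapsed filtration is \emph{not} $\min(\alpha,2-\alpha)$; by the definition
\[
C^\alpha_t \;=\; \sum_{\alpha m + (2-\alpha) n \ge t} C_{m,n},
\]
it is simply $\alpha$ on all of $[0,2]$. Consequently the cobordism maps $\phi,\bar\phi$, which shift the first index by $-2g$ and preserve the second, yield
\[
\Phi_U(\alpha)\ \ge\ \Phi_K(\alpha) - 2g\alpha
\qquad\text{and}\qquad
\Phi_K(\alpha)\ \ge\ \Phi_U(\alpha) - 2g\alpha,
\]
hence only $|\Phi_K(\alpha)| \le 2g\alpha$. (Your worry about the two maps being ``redundant'' is unfounded; they do give a two-sided estimate, just the weaker one.) Your attempt to infer the weight from the endpoint behaviour of $\Phi_K$ is circular: it assumes the theorem you are trying to prove. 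The paper states explicitly at the start of Section~5 that the direct cobordism argument gives exactly this bound $2g\alpha$, which agrees with the theorem on $[0,1]$ but is strictly weaker on $(1,2]$.

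The paper's actual proof of the $2-\alpha$ half is genuinely different and more involved. One replaces $K$ by a concordant knot bounding a ribbon surface $\Sigma$ of genus $g$, then inserts $n$ full positive twists into each band to obtain knots $K_n$. The crossing-change inequality (Proposition~\ref{crossingchange}) shows $\Phi_{K_n}(\alpha)$ is monotone decreasing in $n$; the main work (Lemma~5.2 and Corollary~\ref{limitval}) is to show that for large $n$ the reduced Khovanov homology of $K_n$ is supported in delta grading $-2g$ over any fixed homological range, using Willis's stabilization of Khovanov homology under twisting and an identification of the limiting knot in $\#\,S^2\times S^1$. Theorem~\ref{homologyconstraint} then forces $\Phi_{K_n}(\alpha)\to -2g\Lambda(\alpha)$ for $\alpha\neq 1$, giving $\Phi_K(\alpha)\ge -2g\Lambda(\alpha)$; applying this to $-K$ yields the other side. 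None of this machinery is present in your proposal, and it cannot be replaced by a bookkeeping argument with the cobordism maps alone.
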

Note that, although the slice genus bound from $\Upsilon$ has the same shape as this one, in the case of $\Upsilon$ the symmetry $\Upsilon_K(t) = \Upsilon_K(2-t)$ means that all of the information is contained in the bound over the interval $[0,1]$. In the case of $\Phi$, however, it is possible that the bound on the interval $[1,2]$ has additional information, although this does not occur in any known example. 

\section{Preliminaries on homological algebra}

\begin{defn}
A semibifiltered complex is a chain complex $(C,d)$ over a field together with an increasing $\Z \oplus \Z$-filtration $C_{i,j}$ of $C$, with no assumption on how the differential $d$ interacts with the complex. A semibifiltered complex is $(m,n)$-bounded if $d(C_{i,j}) \subset C_{i+m,j+n}$. 
\end{defn}
In general, we will consider filtered chain maps between semibifiltered complexes, but not require that homotopies between these maps be filtered. In particular, a homotopy equivalence of semibifiltered complexes $C,C'$ consists of filtered chain maps $C \to C'$ and $C' \to C$, such that both composite maps are chain homotopic to the identity via not-necessarily-filtered homotopies. With this definition, the homotopy equivalence class of $\CKh(K)$ as a semibifiltered complex is an invariant of the concordance class of $K$. 

From an $(m,n)$-bounded semibifiltered complex $(C,d)$, one can construct associated graded complexes with respect to both filtrations, producing a $\Z \oplus \Z$-graded chain complex whose differential has degree $(m,n)$. Similarly, one can take the associated graded complex with respect to only the second filtration, producing a $\Z$-graded, $\Z$ filtered chain complex whose differential has degree $n$ with respect to the grading and changes the filtration by at least $m$. Call the first of these constructions the associated graded complex, and the second the vertical associated graded complex. Note that these concepts depend on the choice of $(m,n)$ - If $C$ is $(m,n)$-bounded, then it is also $(m',n')$-bounded for any $m' \le m$ and $n' \le n$, but treating $C$ as an $(m',n')$-bounded complex and taking the associated graded generally results in a complex with vanishing differential. 

A straightforward homological perturbation argument proves the following:
\begin{lemma}\label{spectralstep}
Any $(m,n)$-bounded semibifiltered complex $(C,d)$  is homotopy equivalent to an $(m,n)$-bounded semibifiltered complex $(C',d)$ such that the associated graded module of $C'$ is isomorphic to the homology of the associated graded complex of $(C,d)$. Furthermore, the vertical associated graded complex of $(C',d)$ depends only on the vertical associated graded complex of $(C,d)$. 
\end{lemma}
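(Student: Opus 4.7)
My plan is to prove this by filtered Gaussian elimination / homological perturbation, iteratively cancelling acyclic pairs visible in the associated graded. First, since we work over a field, I would choose a homogeneous vector space basis $\{e_\lambda\}$ of $C$ in which each $e_\lambda$ has a pure bifiltration bidegree $(i_\lambda, j_\lambda)$ — meaning $e_\lambda \in C_{i_\lambda, j_\lambda}$ but has nonzero image in $C_{i_\lambda, j_\lambda}/(C_{i_\lambda-1, j_\lambda} + C_{i_\lambda, j_\lambda - 1})$ — so that the images $\bar e_\lambda$ give a bigraded basis of $\operatorname{gr}(C)$. The leading part of $de_\lambda$ modulo strictly lower bifiltration is $d_{\operatorname{gr}} \bar e_\lambda$. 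Since $d_{\operatorname{gr}}$ is a bigraded differential on a vector space over a field, I would rearrange the basis so that $\operatorname{gr}(C)$ decomposes as $H \oplus \bigoplus_\mu (\Q \bar a_\mu \oplus \Q \bar b_\mu)$, with $d_{\operatorname{gr}} \bar a_\mu = \bar b_\mu$, $d_{\operatorname{gr}}|_H = 0$, and $H \cong H_*(\operatorname{gr}(C))$. Lifting this basis back to $C$ produces elements $\{h_\nu\} \sqcup \{a_\mu, b_\mu\}$ in which $d a_\mu = b_\mu + r_\mu$ and $d h_\nu = s_\nu$, with $r_\mu, s_\nu$ strictly lower in the bifiltration.

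Next, I would cancel the pairs $(a_\mu, b_\mu)$ one at a time via the standard filtered cancellation step: using $d a_\mu = b_\mu + r_\mu$, I would change basis on every other element $e$ by $e \mapsto e - \lambda_e a_\mu$ where $\lambda_e$ is the $b_\mu$-coefficient of $de$, after which $\operatorname{span}(a_\mu, b_\mu)$ splits off as a contractible direct summand and can be discarded. Because each modification replaces $e$ by $e$ minus a multiple of $a_\mu$ of the same or lower bifiltration, the bifiltration on the remaining basis survives, and the complex stays $(m,n)$-bounded and semibifiltered. Iterating over all pairs leaves a $C'$ spanned by the lifts $\{h_\nu\}$, whose associated graded module is $H \cong H_*(\operatorname{gr}(C))$ with zero induced differential, as required. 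The main obstacle will be organizing the iteration so it converges: cancelling one pair modifies other differentials and can affect other pairs. I plan to handle this by processing pairs in descending order of bifiltration bidegree, so each cancellation only perturbs differentials at strictly lower bidegree; under the implicit boundedness hypotheses on the filtration — automatic in the applications to $\CKh(K)$, where $C(K)$ has bounded-below bifiltration — each basis element's differential stabilizes after finitely many modifications.

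Finally, for the furthermore: $\operatorname{gr}(C)$ is canonically the associated graded of $\operatorname{gr}^v(C)$ with respect to its residual first filtration, so every choice above — pure-bifiltration basis, pair decomposition, lifts — can be made to factor through the projection $C \twoheadrightarrow \operatorname{gr}^v(C)$. Concretely, I would first perform the reorganization on $\operatorname{gr}^v(C)$ itself, then lift arbitrarily through the surjection to $C$. Each cancellation on $C$ then projects step-for-step to the corresponding cancellation on $\operatorname{gr}^v(C)$, so in the limit $\operatorname{gr}^v(C')$ is canonically identified with the analogous reduction of $\operatorname{gr}^v(C)$ — a reduction depending only on $\operatorname{gr}^v(C)$ itself, as claimed.
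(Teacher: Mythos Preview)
Your approach is correct and is precisely the ``straightforward homological perturbation argument'' the paper invokes without giving any details; the filtered Gaussian elimination you describe, together with the observation that the cancellations can be chosen compatibly with the projection $C \twoheadrightarrow \operatorname{gr}^v(C)$, is the standard way to prove such a statement. Your caveat about needing some boundedness of the filtration for convergence is well-taken and implicit in the paper's intended application to $\CKh(K)$.
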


The complex $\CKh(K)$ is $(-3,0)$-bounded, and the homology of its associated graded complex is precisely the underlying vector space of the rational Khovanov chain complex, as originally defined. This lemma can therefore produce a model of $\CKh(K)$ with underlying vector space equal to the Khovanov chain complex. This is finite dimensional and already much more convinient for computations, but an even further simplification is possible:

\begin{lemma}\label{khsimplify}
The complex produced from $\CKh(K)$ via Lemma~\ref{spectralstep} is in fact $(-3,1)$-bounded, and its associated graded complex as a $(-3,1)$-bounded semibifiltered complex is precisely the Khovanov complex of $K$. 
\end{lemma}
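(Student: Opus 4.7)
The plan is to track carefully the bidegrees that appear in the homological perturbation argument producing $C'$ in Lemma~\ref{spectralstep}. The key initial observation is that the differential on $C(K)$ lowers its $\Z$-grading by exactly $3$, not merely by at least $3$. Consequently, after choosing a splitting of the bifiltration on $\CKh(K)$, the induced differential $d$ decomposes as $d = d_{-3,0} + d_+$, where every piece has $\Delta i = -3$ exactly and $d_+ = \sum_{b \geq 1} d_{-3,b}$ collects those components that strictly raise the $j$-filtration.

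The complex $C'$ is produced by applying homological perturbation to contract $\CKh(K)$ onto $H(\operatorname{gr} \CKh(K), d_{-3,0})$, which by the discussion preceding the lemma is the underlying vector space of the rational Khovanov chain complex. I would fix contraction data $(\iota, \pi, h)$ in which $\iota$ and $\pi$ have bidegree $(0,0)$ while the homotopy $h$, being a homotopy inverse to $d_{-3,0}$, has bidegree $(+3, 0)$. The perturbed differential on $C'$ takes the standard form
\begin{equation*}
d' = \pi d_+ \iota + \pi d_+ h d_+ \iota + \pi d_+ h d_+ h d_+ \iota + \cdots.
\end{equation*}
Each summand is a composition of $k+1$ factors of $d_+$ (each of bidegree $(-3, \geq 1)$) with $k$ factors of $h$ (each of bidegree $(+3, 0)$), so its total bidegree is $(-3(k+1) + 3k,\ \geq k+1) = (-3, \geq k+1)$. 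In particular every summand has $\Delta i = -3$ and $\Delta j \geq 1$, which proves that $C'$ is $(-3,1)$-bounded.

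To identify the associated graded of $C'$ as a $(-3,1)$-bounded complex with the Khovanov complex of $K$, I would note that only the $k=0$ summand contributes to the bidegree $(-3,1)$ part of $d'$, yielding $\pi\, d_{-3,1}\, \iota$. This is precisely the first nonzero differential of the spectral sequence associated to the $j$-filtration on $\CKh(K)$, and under the identification of the $E_1$ page with the Khovanov chain complex supplied by Proposition~\ref{elementaryE-1}, it is exactly the Khovanov differential.

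The main technical obstacle I expect is verifying that the contraction data underlying Lemma~\ref{spectralstep} can genuinely be chosen with the claimed homogeneous bidegrees, and in particular that the homotopy $h$ can be taken of bidegree exactly $(+3, 0)$. Since the bi-associated graded of $\CKh(K)$ is $\Z \oplus \Z$-bigraded and finite-dimensional in each bidegree (as $C(K)$ is finitely generated over $\Z[x]$ and $x$ has been inverted to a constant), one can build an Eilenberg--Zilber contraction onto the $d_{-3,0}$-homology bidegree by bidegree and then lift it to $\CKh(K)$ itself while preserving the homogeneous bidegrees, so the perturbation formula above applies verbatim.
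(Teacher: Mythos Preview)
Your argument has a genuine gap at its very first step. You claim that because the differential on $C(K)$ lowers the $\Z$-grading by exactly $3$, after choosing a splitting of the bifiltration on $\CKh(K)$ every component of $d$ has $\Delta i = -3$ exactly. This inference fails: in forming $\CKh(K) = (C(K)\otimes\Q)/(x-1)$ you impose the relation $x = 1$, and since $x$ has degree $-2$ this collapses the $\Z$-grading to a mere filtration (together with the residual $\Z/2\Z$-grading). Concretely, if $\{e_l\}$ is a homogeneous $\Q[x]$-basis for $C(K)\otimes\Q$ and $d e_k = \sum_l c_l\, x^{m_l} e_l$ with each $m_l \ge 0$ and $|e_l| - 2m_l = |e_k| - 3$, then in $\CKh(K)$ one has $d\bar e_k = \sum_l c_l\, \bar e_l$, and the $i$-shift of the $l$-th term is $|e_l| - |e_k| = 2m_l - 3$, which can be any odd integer $\ge -3$. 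Thus the perturbation $d - d_{-3,0}$ will in general contain components $d_{-1,0},\, d_{1,0},\ldots$ with $\Delta j = 0$, and your perturbation formula then produces terms of $d'$ with $\Delta j = 0$; the $(-3,1)$-boundedness of $C'$ does not follow from bidegree bookkeeping alone.

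The paper's argument is genuinely different and does not attempt to control the $i$-shifts directly. It invokes the second clause of Lemma~\ref{spectralstep}, that the vertical associated graded of $C'$ is determined by that of $\CKh(K)$, and then identifies the latter (from the construction in \cite{ballinger2020concordance}) as a direct sum of shifts of $\CKh(U)$ for unlinks $U$. For an unlink the reduced Khovanov homology and the reduced Lee homology have the same rank, so the $i$-filtration spectral sequence on each summand degenerates immediately, forcing the differential on the vertical associated graded of $C'$ to vanish. That vanishing is exactly the statement that $C'$ is $(-3,1)$-bounded. This rank argument, which genuinely uses the specific origin of $\CKh(K)$, is what replaces the purely formal bidegree calculation you attempted.
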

\begin{proof}
To say that a $(-3,0)$-bounded complex is $(-3,1)$-bounded is exactly to say that the differential of it's vertical associated graded complex vanishes. By the second part of Lemma~\ref{spectralstep}, this can be checked looking only at the vertical associated graded complex of $\CKh(K)$, which (directly from the definition in \cite{ballinger2020concordance}) is precisely a direct sums of shifts of copies of $\CKh(U)$ where $U$ is an unlink with a varying number of components. The homology of such a complex is the reduced Lee homology of this unlink, and the homology of it's associated graded complex is the reduced Khovanov homology of this unlink, so since these have the same rank all higher vertical differentials vanish. Finally, the differential of the associated graded complex of this $(-3,-1)$-bounded complex is precisely the differential on the $E_1$-page of the spectral sequence considered in \cite{ballinger2020concordance}, where it is shown that this is the differential of the Khovanov complex.
\end{proof}

Therefore, applying Lemma~\ref{spectralstep} once more proves the following, which enables most of the computations to follow:
\begin{lemma}\label{fullsimplify}
$\CKh(K)$ is chain homotopy equivalent to a $(-3,1)$-bounded semibifiltered complex $(C,d)$ such that the associated graded module of the filtration on $C$ is isomorphic to the Khovanov homology $\operatorname{Kh}(K)$, with its internal and homological gradings. 
\end{lemma}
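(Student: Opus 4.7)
The plan is to apply Lemma~\ref{spectralstep} twice in succession, using Lemma~\ref{khsimplify} as a bridge between the two applications. First I would start with $\CKh(K)$ viewed as a $(-3,0)$-bounded semibifiltered complex. Its associated graded complex has underlying vector space the rational Khovanov chain complex (as noted right before Lemma~\ref{khsimplify}), whose homology is Khovanov homology $\operatorname{Kh}(K)$. Applying Lemma~\ref{spectralstep} produces a $(-3,0)$-bounded complex $(C_1,d_1)$ homotopy equivalent to $\CKh(K)$ whose associated graded module is $\operatorname{Kh}(K)$—but this associated graded is only at the level of modules, and we do not yet know that the resulting filtered complex has the Khovanov complex as its associated graded in a useful sense.

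To remedy this, I would first take the intermediate step: apply Lemma~\ref{spectralstep} once to obtain an equivalent $(-3,0)$-bounded complex $(C_0,d_0)$ whose associated graded module is the underlying vector space of the Khovanov chain complex. By Lemma~\ref{khsimplify}, $C_0$ is in fact $(-3,1)$-bounded and its associated graded as a $(-3,1)$-bounded complex is the Khovanov chain complex. Now I apply Lemma~\ref{spectralstep} a second time, this time to $C_0$ viewed as a $(-3,1)$-bounded complex. This produces the desired complex $(C,d)$: it is $(-3,1)$-bounded, homotopy equivalent to $C_0$ (hence to $\CKh(K)$), and its associated graded module is the homology of the Khovanov chain complex, namely $\operatorname{Kh}(K)$.

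Finally I would verify that the bigrading on the resulting associated graded module matches the internal and homological gradings on Khovanov homology. This follows because the second filtration on $\CKh(K)$ corresponds, on the $E_2$ page of its spectral sequence, to the homological grading on Khovanov homology, while the first grading corresponds to the internal grading $q - 3h$, as recorded in the discussion following Proposition~\ref{elementaryE-1}; both identifications are preserved by the homotopy equivalences produced by Lemma~\ref{spectralstep} since these lemmas respect the filtrations.

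The main obstacle is essentially bookkeeping: the lemma is a pure formal consequence of Lemmas~\ref{spectralstep} and \ref{khsimplify}, so there is no substantive new content to establish. The one subtlety to be careful about is that the first application of Lemma~\ref{spectralstep} gives a complex whose associated graded \emph{module} is the Khovanov chain complex, and one must pass through Lemma~\ref{khsimplify} to upgrade this to a statement about the complex structure before applying Lemma~\ref{spectralstep} a second time; skipping this intermediate step would not give control over the differential on the associated graded.
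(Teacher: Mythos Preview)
Your proposal is correct and follows exactly the paper's approach: the paper's entire proof is the single sentence ``Therefore, applying Lemma~\ref{spectralstep} once more proves the following,'' which is precisely your two-step application of Lemma~\ref{spectralstep} with Lemma~\ref{khsimplify} as the bridge. Your additional remarks about the grading identification and the necessity of the intermediate step are accurate elaborations of what the paper leaves implicit.
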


\begin{figure}[h]
  \centering
  \def\svgwidth{\columnwidth}
\begingroup%
  \makeatletter%
  \providecommand\color[2][]{%
    \errmessage{(Inkscape) Color is used for the text in Inkscape, but the package 'color.sty' is not loaded}%
    \renewcommand\color[2][]{}%
  }%
  \providecommand\transparent[1]{%
    \errmessage{(Inkscape) Transparency is used (non-zero) for the text in Inkscape, but the package 'transparent.sty' is not loaded}%
    \renewcommand\transparent[1]{}%
  }%
  \providecommand\rotatebox[2]{#2}%
  \newcommand*\fsize{\dimexpr\f@size pt\relax}%
  \newcommand*\lineheight[1]{\fontsize{\fsize}{#1\fsize}\selectfont}%
  \ifx\svgwidth\undefined%
    \setlength{\unitlength}{358.56000137bp}%
    \ifx\svgscale\undefined%
      \relax%
    \else%
      \setlength{\unitlength}{\unitlength * \real{\svgscale}}%
    \fi%
  \else%
    \setlength{\unitlength}{\svgwidth}%
  \fi%
  \global\let\svgwidth\undefined%
  \global\let\svgscale\undefined%
  \makeatother%
  \begin{picture}(1,0.73317093)%
    \lineheight{1}%
    \setlength\tabcolsep{0pt}%
    \put(0,0){\includegraphics[width=\unitlength,page=1]{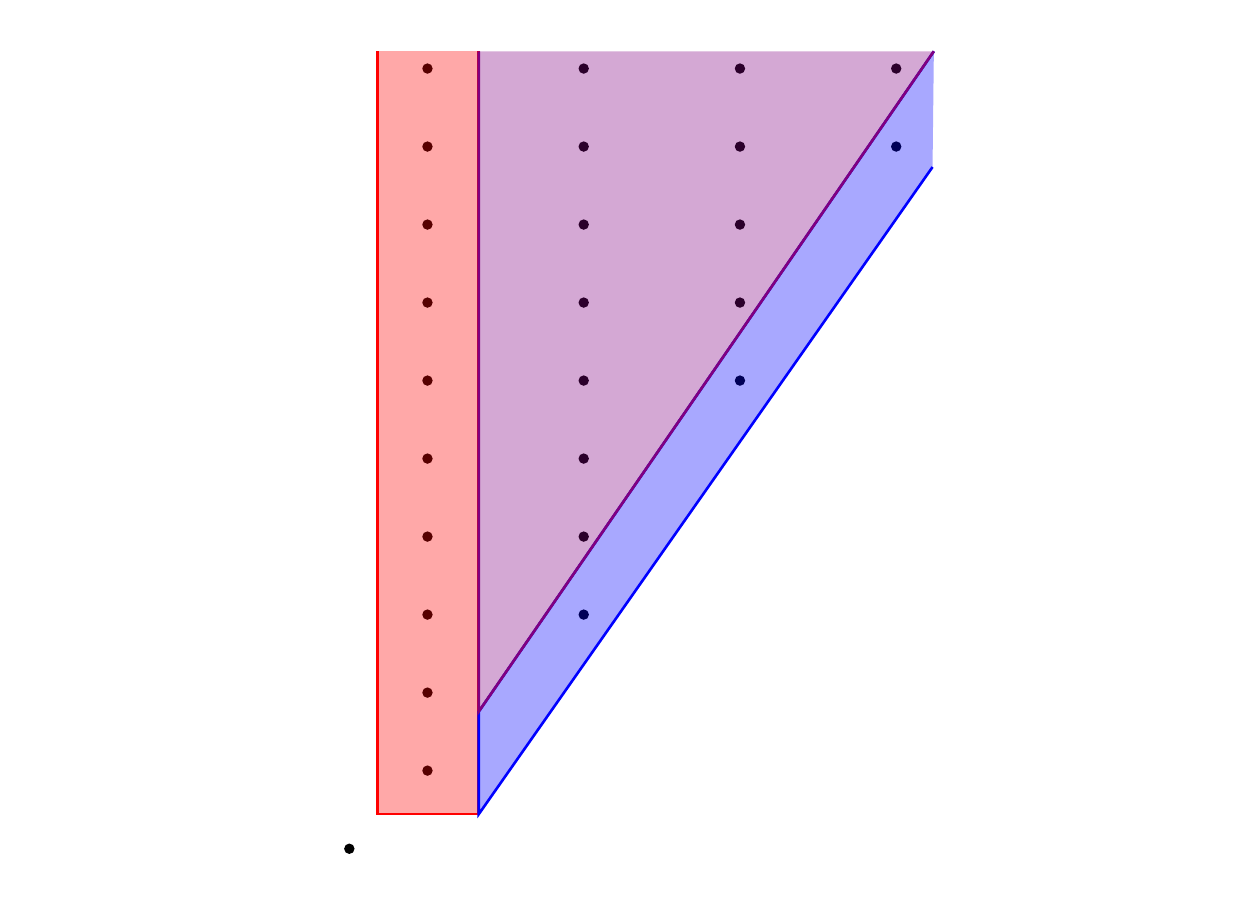}}%
    \put(0.26086268,0.02949619){\color[rgb]{0,0,0}\makebox(0,0)[lt]{\lineheight{1.25}\smash{\begin{tabular}[t]{l}$1$\end{tabular}}}}%
    \put(0.30946017,0.0887009){\color[rgb]{0,0,0}\makebox(0,0)[lt]{\lineheight{1.25}\smash{\begin{tabular}[t]{l}$q^2h$\end{tabular}}}}%
    \put(0.42575662,0.21023718){\color[rgb]{0,0,0}\makebox(0,0)[lt]{\lineheight{1.25}\smash{\begin{tabular}[t]{l}$q^6h^3$\end{tabular}}}}%
    \put(0.30878253,0.15144201){\color[rgb]{0,0,0}\makebox(0,0)[lt]{\lineheight{1.25}\smash{\begin{tabular}[t]{l}$q^4h$\end{tabular}}}}%
    \put(0.30983898,0.2137737){\color[rgb]{0,0,0}\makebox(0,0)[lt]{\lineheight{1.25}\smash{\begin{tabular}[t]{l}$q^6h$\end{tabular}}}}%
    \put(0.5547469,0.39865557){\color[rgb]{0,0,0}\makebox(0,0)[lt]{\lineheight{1.25}\smash{\begin{tabular}[t]{l}$q^{12}h^5$\end{tabular}}}}%
    \put(0.68459444,0.58569167){\color[rgb]{0,0,0}\makebox(0,0)[lt]{\lineheight{1.25}\smash{\begin{tabular}[t]{l}$q^{18}h^7$\end{tabular}}}}%
  \end{picture}%
\endgroup%

  \caption{The allowed differentials in the model for $\CKh(K)$ provided by Lemma~\ref{fullsimplify}. The complex $\CKh(K)$ has a model whose underlying vector space is the reduced Khovanov homology of $K$ with rational coefficients, and the differential of an element in the grading labelled $1$ in the diagram lies in the gradings marked by dots in the shaded regions. This diagram takes into account the fact that the differential must increase the homological grading $h$ by at least $1$, decrease the internal grading $q - 3h$ by at most $3$, and change the parity of the internal grading, as well as the fact that no differential of degree $h$ will be present due to the last application of Lemma~\ref{spectralstep} passing from the Khovanov complex to the Khovanov homology. The terms of the differential in the red region will be exactly the Lee differential of $K$, and similarly the terms in the blue region will be the $E(-1)$ differential of $K$.}\label{alloweddifferentials}
\end{figure}

\section{Invariants of filtered complexes}

Let $C$ be a decreasing $\R$-filtered vector space with filtered pieces $\{C_t\}_{t \in \R}$ and $d$ a differential on $C$, with no assumption on how $d$ interacts with the filtration. If the homology of $(C,d)$ is one dimensional, define $m(C)$ to be the supremum of all $t$ for which there is an element $\gamma$ in $C_t$ with $d\gamma = 0$ and $[\gamma]$ nonzero in homology. 

\begin{lemma}\label{additivity}
For $C$, $C'$ as above, $m(C \otimes C') = m(C) + m(C')$. 
\end{lemma}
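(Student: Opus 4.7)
The plan is to prove the two inequalities $m(C \otimes C') \geq m(C) + m(C')$ and $m(C \otimes C') \leq m(C) + m(C')$ separately. For the lower bound: given any $s < m(C)$ and $s' < m(C')$, by definition of these values as suprema I can choose cycles $\gamma \in C_s$ and $\gamma' \in C'_{s'}$ with $[\gamma]$ and $[\gamma']$ generating the respective homologies. Then $\gamma \otimes \gamma'$ is a cycle in $(C \otimes C')_{s+s'}$, and by K\"unneth it represents the (unique up to scale) generator of $H(C \otimes C') \cong H(C) \otimes H(C')$. Letting $s \to m(C)$ and $s' \to m(C')$ gives the bound.

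For the reverse inequality the key step is to construct, for any complex $D$ of the given type, a chain map $\phi_D \colon D \to K$ (landing in the ground field) that induces an isomorphism on homology and vanishes on $D_s$ for every $s > m(D)$. Granting this, take any cycle $\sigma \in (C \otimes C')_t$ representing the generator of $H(C \otimes C')$, and write $\sigma = \sum_i a_i \otimes b_i$ with $a_i \in C_{s_i}$, $b_i \in C'_{s'_i}$, and $s_i + s'_i \geq t$, as permitted by the definition of the tensor-product filtration. Since $\phi_C \otimes \phi_{C'}$ is a quasi-isomorphism, $\sum_i \phi_C(a_i) \phi_{C'}(b_i) \neq 0$, so some term must be nonzero, forcing $s_i \leq m(C)$ and $s'_i \leq m(C')$, and hence $t \leq m(C) + m(C')$ as required.

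To build $\phi_D$, first observe that any cycle in $D_s$ with $s > m(D)$ has trivial homology class (directly from the definition of $m(D)$ as a supremum), and so lies in $d(D)$. Set $W := d(D) + \bigcup_{s > m(D)} D_s$; any cycle of the form $d\beta + \gamma_1 \in W$ then satisfies $d\gamma_1 = 0$, so $\gamma_1$ is itself a boundary and the whole cycle lies in $d(D)$. Consequently any representative $\gamma_0$ of the generator of $H(D)$ lies outside $W$, and I can define $\phi_D$ to be any linear functional on the quotient $D/W$ sending $\gamma_0 + W$ to $1$, pulled back to $D$. The construction of $\phi_D$ is the main obstacle: because the filtration and differential on $D$ interact arbitrarily, $W$ is not a subcomplex in any usual sense, and the nontrivial observation is that every cycle in $W$ is nevertheless a boundary.
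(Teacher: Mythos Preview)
Your proof is correct. Both your argument and the paper's hinge on the same subspace $W_D = d(D) + D_{>m(D)}$ and the same key observation that any cycle in $W_D$ is a boundary (equivalently, that a generator $\gamma_0$ of $H(D)$ lies outside $W_D$). The difference is purely in packaging: the paper works directly with this characterization, noting that $W_{C\otimes C'} \subset W_C \otimes C' + C \otimes W_{C'}$ and that $\gamma \otimes \gamma'$ avoids the latter, thereby obtaining both inequalities from the single statement $\gamma\otimes\gamma' \notin W_{C\otimes C'}$. You instead dualize, extracting a functional $\phi_D$ vanishing on $W_D$, and then use $(\phi_C\otimes\phi_{C'})(\sigma)\neq 0$ to locate a term of $\sigma$ with both filtration indices bounded above. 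Your route makes the upper bound argument slightly more explicit (one actually sees which term of $\sigma$ obstructs going deeper), at the cost of treating the two inequalities separately; the paper's is a line shorter but relies on the reader unpacking why ``$\gamma\otimes\gamma' \notin W_{C\otimes C'}$'' yields both bounds.
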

\begin{proof}
For $t \in \R$, define $C_{> t}$ to be the union of $C_s$ for all $s > t$. Since the homology of $C$ is one-dimensional, any two chains representing nonzero homology classes have homologous scalar multiples. Therefore, the quantity $m(C)$ is characterized by the fact that, for all $t < m(C)$, there is $\gamma \in C_t$ with $d\gamma = 0$ and $\gamma \not\in d(C) + C_{>m(C)}$. Fix $t < m(C)$ and $t' < m(C')$, and elements $\gamma$ and $\gamma'$ with this property. Then $\gamma \otimes \gamma' \in C_t \otimes C_{t'} \subset (C \otimes C')_{t + t'}$, and is not contained in the subspace
\begin{equation*}
d(C) \otimes C' + C_{>m(C)} \otimes C' + C \otimes d(C') + C \otimes C'_{>m(C')}
\end{equation*}
Now, $d(C \otimes C')$ is a subspace of $d(C) + d(C')$, and $C_{>m(C)} \otimes C' + C \otimes C'_{>m(C')}$ is a subspace of $(C \otimes C')_{> m(C) + m(C')}$, so $\gamma \otimes \gamma'$ is not contained in $d(C \otimes C') + (C \otimes C')_{> m(C) + m(C')}$, so taking $t$ and $t'$ arbitrarily close to $m(C)$ and $m(C')$, respectively, finishes the proof. 
\end{proof}

If now $(C,d)$ is a semibifiltered complex with one-dimensional homology, then for any $\alpha \in [0,2]$ we can collapse the filtration to one $\R$-filtration and apply the above construction: let
\begin{equation*}
C^\alpha_t = \sum_{ \alpha m + (2-\alpha) n \ge t} C_{m,n},
\end{equation*}
and let $M(C,\alpha) = m(C^\alpha)$. Then by the additivity of $m$, $M(C \otimes C',\alpha) = M(C,\alpha) + M(C',\alpha)$. 

The complex $\CKh(K)$ is semibifiltered with one-dimensional homology, so the above definition applies:
\begin{defn}
For a knot $K$ and $\alpha \in [0,2]$, let $\Phi_K(\alpha) = M(\CKh(K),\alpha)$. 
\end{defn}

Applying this construction to an appropriate version of knot Floer homology produces the Upsilon invariant. Via arguments similar to Theorem 8.1 in \cite{livingston2017notes}, analogues of the integrality properties of Upsilon can be proven in this more general context.

\begin{prop}
Suppose that $(C,d)$ is a semibifiltered complex with $C$ finite dimensional. Then the function $M(C,-)$ is continuous and piecewise-linear, and it's graph is contained in the union of the lines passing through $(0,2j)$ and $(2,2i)$ as $(i,j)$ ranges over the support of the associated graded module to $C$. 
\end{prop}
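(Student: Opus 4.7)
The plan is to prove three things in order: (i) each value $M(C,\lambda)$ lies on one of the listed lines; (ii) $M(C,\cdot)$ is continuous; and (iii) (i) and (ii) together force piecewise linearity.

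For (i), the key reduction is that the sum $C^\lambda_t = \sum_{\lambda m + (2-\lambda)n \ge t} C_{m,n}$ depends only on the set of \emph{support} pairs lying in the half-plane $\lambda m + (2-\lambda)n \ge t$. Indeed, if $(m,n)$ is a non-support pair then $C_{m,n} = C_{m+1,n} + C_{m,n+1}$, and both $(m+1,n)$ and $(m,n+1)$ still lie in the half-plane, so non-support contributions can be iteratively eliminated; finite-dimensionality ensures termination at support pairs or at zero. Consequently the filtration $\{C^\lambda_t\}_t$ jumps only at values $t = \lambda i + (2-\lambda)j$ with $(i,j)$ in the support. Since $C$ is finite dimensional the supremum $M(C,\lambda)$ is attained and must occur at such a jump (otherwise $t$ could be increased slightly without losing the representing cycle), giving $M(C,\lambda) = \lambda i + (2-\lambda)j$ for some support pair; the line $\lambda \mapsto \lambda i + (2-\lambda)j$ passes through $(0,2j)$ and $(2,2i)$.

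For (ii), I would argue lower and upper semicontinuity separately. Lower semicontinuity at $\lambda_0$ is a tracking argument: take a cycle $\gamma = \sum_{(m,n)\in T}\gamma_{m,n}$ with $[\gamma] \ne 0$ and $\gamma \in C^{\lambda_0}_{M(C,\lambda_0)}$, so that $\lambda_0 m + (2-\lambda_0)n \ge M(C,\lambda_0)$ for each $(m,n)\in T$; then $\phi(\lambda) := \min_T(\lambda m + (2-\lambda)n)$ is continuous in $\lambda$, satisfies $\phi(\lambda_0) \ge M(C,\lambda_0)$, and $\gamma \in C^\lambda_{\phi(\lambda)}$ yields $M(C,\lambda) \ge \phi(\lambda)$. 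For upper semicontinuity, pick $t > M(C,\lambda_0)$ avoiding the finite exceptional set $\{\lambda_0 i + (2-\lambda_0)j : (i,j) \in \mathrm{supp}\}$; by continuity of these linear expressions, the set of support pairs in the half-plane is locally constant near $\lambda_0$, so step (i) gives $C^\lambda_t = C^{\lambda_0}_t$ in a neighborhood; since every cycle in $C^{\lambda_0}_t$ is a boundary, so is every cycle in $C^\lambda_t$, forcing $M(C,\lambda) < t$. Letting $t \to M(C,\lambda_0)^+$ through such generic values completes the argument.

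Step (iii) follows because a continuous function whose graph lies in the union of finitely many lines can switch between them only at their pairwise intersection points, of which there are finitely many. The main technical hurdle I expect is step (i): since an arbitrary finite-dimensional bifiltered vector space need not admit a bigraded basis (one can easily build a bifiltration whose associated graded has strictly larger dimension than $C$), the reduction to support pairs cannot be read off from coordinates and must be carried out directly via the iterative argument above, with attention to the fact that each replacement remains inside the half-plane and that the process terminates.
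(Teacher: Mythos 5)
Your proof is correct, and it takes a genuinely more careful route than the paper's, especially for continuity. The paper's proof also rests on showing that $M(C,\alpha)$ lies in the support of the collapsed filtration $C^\alpha$ and that this support is contained in the stated lines, but it treats the latter as immediate; you supply the needed reduction $C_{m,n}=C_{m+1,n}+C_{m,n+1}$ at non-support pairs explicitly, and correctly note that each replacement stays in the half-plane because $\alpha,2-\alpha\ge 0$. The real divergence is the continuity argument. The paper writes $M(C,\alpha)=\max_{[\gamma]\ne 0}M(\gamma,\alpha)$, asserts that each $M(\gamma,-)$ is ``the minimum of several linear functions corresponding to the support of $\gamma$,'' and concludes $M(C,-)$ is a maximum of finitely many continuous functions. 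This is delicate precisely because of the point you raise: a semibifiltered finite-dimensional space need not admit a bigraded splitting, so a chain $\gamma$ has no canonical support set, and $M(\gamma,-)$ is really a max (over decompositions) of such minima. Your lower-semicontinuity argument is close in spirit to the paper's---fix one witness decomposition and track it---but your upper-semicontinuity argument (local constancy of $C^\lambda_t$ at generic $t$, then no nonzero cycle survives there) is a different and cleaner idea that sidesteps having to control $M(\gamma,-)$ for all $\gamma$ at once. Step (iii) is standard and coincides with the paper's. One small caveat shared by both proofs: termination of the iterative reduction and finiteness of the support implicitly require the bifiltration to be bounded/separated, which is taken for granted given that $C$ is finite-dimensional; you might flag this explicitly rather than attributing termination to finite-dimensionality alone.
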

\begin{proof}
For fixed $\alpha$, the quantity $M(C,\alpha)$ must lie in the support of the associated graded module to the collapsed filtration $C^\alpha$ since otherwise any chain lying in filtration level $M(C,\alpha)$ would in fact lie in a deeper level. As $\alpha$ varies, the support of this associated module traces out the lines in the statement, so it remains only to prove that $M(C,-)$ is a continuous function of $\alpha$. For a fixed chain $\gamma \in C$, let $M(\gamma,\alpha)$ be the largest $t$ for which $\gamma$ is contained in $C^\alpha_t$. This will be the minumum of several linear functions corresponding to the support of $\gamma$, so will be continuous and piecewise linear. The quantity $M(C,\alpha)$ is the maximum of $M(\gamma,\alpha)$ over all $\gamma$ representing a nonzero homology class. Since each $M(\gamma,\alpha)$ is contained in the lines corresponding to the support of $C$, of which there are finitely many, there are only finitely many possible values for $M(\gamma,\alpha)$. Therefore, $M(C,\alpha)$ is the maximum of finitely many continuous functions, so is itself continuous. 
\end{proof}

Combining this with Lemma~\ref{fullsimplify} immediately proves Theorem~\ref{homologyconstraint}.

The quantity $M(C,\alpha)$ behaves in a predictable way under chain maps, which will be important to establish the basic properties of $\Phi_K$.

\begin{lemma}\label{boundingmap}
If $C$ and $C'$ are semibifiltered complexes and there is a filtered chain map $f: C \to C'$ inducing an isomorphism on homology, then $M(C,\alpha) \le M(C',\alpha)$. 
\end{lemma}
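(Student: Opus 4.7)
The plan is to unwind the definitions and verify that a filtered quasi-isomorphism transports cycles witnessing $M(C,\alpha)$ to cycles witnessing the same bound in $C'$. Concretely, fix $\alpha \in [0,2]$ and $t < M(C,\alpha) = m(C^\alpha)$. By the characterization of $m$ used in the proof of Lemma~\ref{additivity}, there exists $\gamma \in C^\alpha_t$ with $d\gamma = 0$ representing a nonzero class in $H_*(C)$.

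First I would verify that the filtered hypothesis on $f$ implies $f(C^\alpha_t) \subset (C')^\alpha_t$. This is immediate from the definition $C^\alpha_t = \sum_{\alpha m + (2-\alpha) n \ge t} C_{m,n}$: since $f(C_{m,n}) \subset C'_{m,n}$ for every $(m,n)$, the sum on the left maps into the corresponding sum on the right. In particular $f(\gamma) \in (C')^\alpha_t$.

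Next, $f(\gamma)$ is a cycle because $df(\gamma) = f(d\gamma) = 0$, and its homology class is $f_*[\gamma]$, which is nonzero because $f_*$ is an isomorphism and $[\gamma] \ne 0$. This shows $M(C',\alpha) \ge t$ for every $t < M(C,\alpha)$, and letting $t \nearrow M(C,\alpha)$ gives the desired inequality.

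There is no real obstacle here: once one has recorded that a filtered chain map respects the collapsed filtration $C^\alpha_\bullet$ and that a quasi-isomorphism sends nonzero homology classes to nonzero homology classes, the bound $M(C,\alpha) \le M(C',\alpha)$ is forced. The only subtle point worth flagging is that the definition of $M$ requires one-dimensional homology, but this is preserved under $f_*$, so both sides of the claimed inequality are well defined.
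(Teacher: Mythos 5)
Your argument is correct and essentially identical to the paper's: take a cycle $\gamma \in C^\alpha_t$ representing a nonzero class, push it forward by $f$ into $(C')^\alpha_t$, note the image is still a nonzero cycle, and let $t \nearrow M(C,\alpha)$. The only difference is that you spell out the verification that $f$ respects the collapsed filtration $C^\alpha_t$, which the paper leaves implicit.
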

\begin{proof}
For any $t < M(C,\alpha)$, there is a chain $\gamma \in C^\alpha_t$ representing a nonzero homology class. Then $f(\gamma) \in (C')^\alpha_t$, so $t \le M(C',\alpha)$. By taking $t$ approaching $M(C,\alpha)$, then $M(C,\alpha) \le M(C',\alpha)$. 
\end{proof}

\subsection{Basic Properties of $\Phi_K(\alpha)$}

This section contains proofs of most of the theorems stated in the introduction, as well as a few additional facts needed for later computations. 

\begin{prop}
$\Phi$ is a concordance homomorphism.
\end{prop}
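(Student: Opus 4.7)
The plan is to verify the two defining properties of a concordance homomorphism separately: concordance invariance of $\Phi_K(\alpha)$, and additivity under connected sum, $\Phi_{K_1 \# K_2}(\alpha) = \Phi_{K_1}(\alpha) + \Phi_{K_2}(\alpha)$. Together these say that $K \mapsto \Phi_K(\alpha)$ descends to a group homomorphism from the smooth concordance group to $\R$; the identity $\Phi_U(\alpha) = 0$ for the unknot $U$ is then automatic from $U \# U = U$. Both properties will be reduced to the homological-algebra facts already established, namely Lemma~\ref{boundingmap} (monotonicity of $M$ under a filtered chain map inducing a homology isomorphism) and the additivity of $M(\cdot,\alpha)$ under tensor products obtained from Lemma~\ref{additivity}.

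For concordance invariance, suppose $K$ and $K'$ are smoothly concordant. The concordance, viewed as a genus-$0$ cobordism $K \to K'$, and its reverse produce, via Proposition~\ref{elementaryprop}, chain maps $\CKh(K) \to \CKh(K')$ and $\CKh(K') \to \CKh(K)$ each inducing an isomorphism on the one-dimensional total homologies. Since $g = 0$, the shift $i \mapsto i - 2g$ is trivial and these maps preserve both filtrations indexing $\CKh$, so after collapsing to the single $\R$-filtration $C^\alpha$ they remain filtered. Lemma~\ref{boundingmap} applied in both directions gives $\Phi_K(\alpha) = \Phi_{K'}(\alpha)$ for every $\alpha \in [0,2]$.

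For additivity, the goal is to establish a filtered chain equivalence
\begin{equation*}
\CKh(K_1 \# K_2) \simeq \CKh(K_1) \otimes \CKh(K_2),
\end{equation*}
or, at minimum, filtered chain maps in both directions each inducing an isomorphism on homology. Once this is in hand, applying Lemma~\ref{boundingmap} twice (after collapsing to $C^\alpha$) yields $M(\CKh(K_1 \# K_2),\alpha) = M(\CKh(K_1) \otimes \CKh(K_2),\alpha)$, and the tensor-product form of Lemma~\ref{additivity} for the collapsed filtration $C^\alpha$ supplies the desired additivity.

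The main obstacle is producing this connected-sum formula at the chain level with the correct filtered behavior. I expect to extract it from the construction of $C(K)$ in \cite{ballinger2020concordance}: at the level of the cube of resolutions, $K_1 \# K_2$ differs from the split link $K_1 \sqcup K_2$ by a single merge saddle, and on the Frobenius-algebra side this becomes the multiplication $\Q[x]/(x-1) \otimes \Q[x]/(x-1) \to \Q[x]/(x-1)$, which is an isomorphism after reducing at the basepoint and setting $x = 1$. The technical heart is checking that the resulting chain equivalence is filtered with respect to both the internal grading and the second filtration; once this is verified, the homomorphism property of $\Phi$ is a formal consequence of the homological-algebra machinery of the preceding section.
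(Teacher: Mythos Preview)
Your argument for concordance invariance is exactly the paper's: apply the genus-zero case of Proposition~\ref{elementaryprop} in both directions and conclude via Lemma~\ref{boundingmap}.

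For additivity the overall strategy is also the same, but the paper short-circuits your ``main obstacle.'' Rather than building a filtered equivalence from a merge saddle between $K_1 \sqcup K_2$ and $K_1 \# K_2$, the paper simply invokes the chain-level identity $C(K_1 \# K_2) \cong C(K_1) \otimes_{\Z[x]} C(K_2)$ from \cite{ballinger2020concordance}; quotienting by $x-1$ and tensoring with $\Q$ then gives $\CKh(K_1 \# K_2) \cong \CKh(K_1) \otimes_\Q \CKh(K_2)$ as bifiltered complexes on the nose, and Lemma~\ref{additivity} finishes. Your saddle-map approach would only produce a cobordism map in one direction, and turning that into a filtered chain \emph{equivalence} (or even a pair of filtered quasi-isomorphisms) is genuinely more work than you indicate; the direct tensor-product formula avoids this entirely. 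So your proposal is correct in outline but leaves open exactly the point the paper handles by citation.
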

\begin{proof}
The complex $C(K)$ from \cite{ballinger2020concordance} satisfies $C(K_1 \# K_2) = C(K_1) \otimes_{\Z[x]} C(K_2)$, so $\CKh(K_1 \# K_2) \cong \CKh(K_1) \otimes_\Q \CKh(K_2)$.  Additivity of $\Phi$ now follows from Lemma~\ref{additivity}. To see that $\Phi$ is a concordance invariant, use Lemma~\ref{boundingmap} and the fact that the complexes $\CKh(K)$ for concordant knots admit filtered chain maps inducing isomorphisms on homology in either direction by Proposition~\ref{elementaryprop}.
\end{proof}

\begin{prop}
For any knot $K$, the values $\Phi_K(0)$ and $\Phi_K(2)$ are both $0$, and the slopes $\Phi_K'(0)$ and $\Phi_K'(2)$ are $s(K)$ and $-t(K)$, respectively. 
\end{prop}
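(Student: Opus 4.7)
The vanishing $\Phi_K(0) = \Phi_K(2) = 0$ is immediate from Theorem~\ref{slicegenus}, since the bound $2g_4(K)\min(\alpha, 2-\alpha)$ forces $\Phi_K$ to zero at both endpoints. The remaining content is the computation of the one-sided slopes, and the plan is to handle $\alpha=0$ in detail, as the case $\alpha=2$ is formally parallel with technical differences noted at the end.

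I would work inside the model $(C,d)$ from Lemma~\ref{fullsimplify}, which is $(-3,1)$-bounded with associated graded the reduced Khovanov homology in the bigrading $(m,n) = (q-3h, h)$. Because $d$ strictly raises the $n$-filtration, this filtration yields a convergent spectral sequence with $E_1$-page the Khovanov homology, abutting to $H_\ast(C) = \mathbb{Q}$. For a cycle $\gamma = \gamma_0 + \gamma_1 + \cdots$ decomposed by $n$-level, the identity $d\gamma = 0$ forces $\gamma_0$ to be a cycle for the $n$-change-$1$ component $d^{(1)}$ of $d$; by Figure~\ref{alloweddifferentials} this component contains the Lee differential as a summand. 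For $\alpha > 0$ sufficiently small, every support point at $n \ge 1$ contributes at least $2-\alpha$ to $\alpha m + (2-\alpha) n$, so the minimum is achieved on $\operatorname{supp}(\gamma_0)$ and equals $\alpha\cdot\min_{\operatorname{supp}(\gamma_0)} m$. Maximizing over cycles $\gamma$ representing the generator of $H_\ast(C)$, the right-derivative $\Phi'_K(0^+)$ equals $\max_{\gamma_0} \min_{\operatorname{supp}(\gamma_0)} m$, where the maximum ranges over $d^{(1)}$-cycles whose class survives to the $E_\infty$-page. Since reduced Lee homology is one-dimensional at $(m,n) = (s(K), 0)$, and the additional $d^{(1)}$-components together with all higher pages can only further collapse the spectral sequence without raising the $m$-level of the surviving generator beyond $s(K)$, this maximum equals $s(K)$.

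The argument at $\alpha = 2$ runs in parallel, with the first filtration by $m$ replacing the second, and the $E(-1)$ differential playing the role of Lee. Writing $\alpha = 2 - \epsilon$, the combined filtration decomposes as $\alpha m + (2 - \alpha) n = 2m + \epsilon(n - m)$; for a cycle $\gamma$ with $\min_i m_i = 0$, which is forced by $\Phi_K(2) = 0$, the left-slope at $\alpha = 2$ is controlled by the minimum of $n_i$ over the subset of $\operatorname{supp}(\gamma)$ with $m_i = 0$. Matching the maximizing value against the definition of $t(K)$ in \cite{ballinger2020concordance} yields the slope $-t(K)$, with the minus sign arising from the direction of approach.

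The principal obstacle is that $d$ does not raise the first filtration, so no standard spectral sequence is available in the $m$-direction. This forces either a dualization of $(C,d)$, exchanging the two filtrations, or a re-application of Lemma~\ref{spectralstep} that straightens the first filtration in the downward direction before the parallel analysis can proceed. A secondary subtlety, which must be checked in both the $\alpha = 0$ and $\alpha = 2$ arguments, is that the higher pages of the relevant spectral sequence cannot shift the filtration level of the surviving generator beyond what Lee or $E(-1)$ alone determines; this reduces to a bookkeeping argument on the support regions in Figure~\ref{alloweddifferentials}.
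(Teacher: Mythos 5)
Your overall plan---working inside the $(-3,1)$-bounded model of Lemma~\ref{fullsimplify}, decomposing a cycle by $n$-level, and reading the slope at $\alpha = 0$ off the lowest-$n$ piece---is in the right spirit, and the first part is close to the paper's proof in structure. But there are two genuine gaps.

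First, the appeal to Theorem~\ref{slicegenus} for $\Phi_K(0) = \Phi_K(2) = 0$ is circular. The proof of Theorem~\ref{slicegenus} is given later and rests on Proposition~\ref{twistcontrol}, which rests on Corollary~\ref{limitval}, whose proof explicitly invokes ``$\Phi_{K_n}(0) = \Phi_{K_n}(2) = 0$.'' In the paper the endpoint values are not cited but fall out of the slope computation: the inductive lifting of $\gamma_0 \in \CKh(K)_{s(K),0}$ produces a generating cycle supported in $n \ge 0$, giving $\Phi_K(\alpha) \ge s(K)\alpha$ for small $\alpha$, and the reverse inequality comes from the same bound applied to $-K$ together with the homomorphism property $\Phi_{-K} = -\Phi_K$; taking $\alpha \to 0$ then gives $\Phi_K(0) = 0$. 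You try to obtain both bounds directly from the spectral sequence, which requires arguing that no generating cycle can have its lowest $n$-piece at a level other than $0$; that step is asserted but not carried out, whereas the $-K$ trick makes it unnecessary.

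Second, the $\alpha = 2$ case is not actually proved. You correctly observe that the differential does not preserve the $m$-filtration (it can shift $m$ either direction, bounded below by $-3$), and you note this blocks the naive spectral-sequence argument. But you then only propose two possible fixes (dualize, or re-apply Lemma~\ref{spectralstep}) without executing either, and close with ``matching against the definition of $t(K)$,'' which is not an argument. The paper's resolution is different and concrete: it passes to the associated graded with respect to only the first filtration, using the $-3$ bound on how much $d$ can drop $m$. This horizontal associated graded kills all the $\Delta m \in \{-1,+1,+3\}$ Lee terms and retains exactly the $\Delta m = -3$ terms, recovering the reduced $E(-1)$ complex $\bar{C}(K)$ from \cite{ballinger2020concordance} (the associated graded replaces the relation $x=1$ by $x=0$). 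The same inductive lifting argument, now run against $\bar{C}(K)$ and its $n$-filtration, produces the left slope $-t(K)$. Your proposal would need to supply an equivalent construction; as written it stops at identifying the problem.
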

\begin{proof}
In the model for $\CKh(K)$ provided by Lemma~\ref{fullsimplify}, the vertical associated graded complex of $\CKh(K)$ is precisely the reduced Khovanov homology of $K$ together with all differentials of the Lee spectral sequence, so the homology of this vertical associated graded complex is $\Q$ in homological degree $0$ and $0$ in all other homological degrees, and with respect to the filtration by the internal degree there is a generator for the homology contained in filtration level $s(K)$ but not in any deeper level of the filtration. Let $\gamma_0 \in \CKh(K)_{s(K),0}$ be any lift of a chain generating the homology of the verical associated graded complex. Since $\gamma$ is a chain in the vertical associated graded, $d\gamma_0 \in \CKh(K)_{*,2}$, and since the homology of the vertical associated graded in degree $2$ vanishes, there is a $\gamma_1 \in \CKh(K)_{*,1}$ with $d\gamma_0 + d\gamma_1 \in \CKh(K)_{*,3}$. Continuing in this way to push remaining terms depper into the filtration, and using the fact that this model of $\CKh(K)$ is finite dimensional, we produce a chain $\gamma \in \CKh(K)_{s(K),0} + \CKh(K)_{*,1}$ generating the homology of $\CKh(K)$ which, again using finite dimensionality, in fact lies in $\CKh(K)_{s(K),0} + \CKh(K)_{m,1}$ for some $m$. Therefore, $\Phi_K(\alpha) \ge s(K)\alpha$ for small enough $\alpha$, and the reverse inequality follows from applying the same argument to $-K$. Similarly, the horizontal associated graded complex of $\CKh(K)$ (taking associated graded with respect to just the first filtration) is exactly the reduced complex $\bar{C}(K)$ from \cite{ballinger2020concordance}, since taking associated graded changes the relation $x = 1$ to $x = 0$, so applying this same argument proves the statement about the right endpoint slope. 
\end{proof}
The version of the Lee spectral sequence from the above proof is not exactly the same as the one used by Rasmussen \cite{rasmussen2010khovanov} to define the $s$ invariant, but is instead a reduced version of the Lee spectral sequence. The version used in \cite{rasmussen2010khovanov} can be instead obtained by replacing the relation $x-1$ by $x^2 - 1$ and shifting all quantum gradings up by $1$ (and again taking the vertical associated graded to remove $E(-1)$ differentials). The unreduced complex has two dimensional homology with generators in filtration levels $s(K) - 1$ and $s(K) + 1$, and the necessary generator of the reduced homology in filtration level $s(K)$ just comes from further quotienting the unreduced complex by $x-1$ to get the reduced complex, and taking the image of the generator in filtration level $s(K) - 1$. This will land in filtration level $s(K)$ in the reduced complex, due to the grading shift.  

\begin{prop}\label{crossingchange}
If $K_-$ and $K_+$ differ by a crossing change in which $K_-$ has a negative crossing and $K_+$ has a positive crossing, then $\Phi_{K_-}(\alpha) \le \Phi_{K_+}(\alpha)$ for any $\alpha \in [0,2]$. 
\end{prop}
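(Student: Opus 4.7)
The plan is to reduce this directly to results already in the paper. By Proposition~\ref{elementaryprop}, the hypothesis that $K_-$ and $K_+$ differ by a single crossing change (with $K_-$ negative and $K_+$ positive) produces a filtered chain map $f \colon \CKh(K_-) \to \CKh(K_+)$ that induces an isomorphism on homology. This is exactly the input needed for Lemma~\ref{boundingmap}, applied to the one-parameter collapsed filtration $(\CKh(K_{\pm}))^\alpha$ used to define $M$.

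So the proof is essentially a one-line invocation. First I would unpack the definition $\Phi_{K_\pm}(\alpha) = M(\CKh(K_\pm),\alpha)$. Next I would observe that any filtered chain map of the original semibifiltered complexes $\CKh(K_-) \to \CKh(K_+)$ automatically restricts to a filtered map $(\CKh(K_-))^\alpha_t \to (\CKh(K_+))^\alpha_t$ for every $t$ and every $\alpha \in [0,2]$, since $C^\alpha_t$ is built as a sum of pieces $C_{m,n}$ with $\alpha m + (2-\alpha)n \ge t$ and these inequalities are preserved by a bifiltered map. Finally I would apply Lemma~\ref{boundingmap} to $f$ viewed as a map of $\R$-filtered complexes with one-dimensional homology, yielding $M(\CKh(K_-),\alpha) \le M(\CKh(K_+),\alpha)$, i.e.\ the claimed inequality $\Phi_{K_-}(\alpha) \le \Phi_{K_+}(\alpha)$.

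There is no real obstacle here: all the substantive work was done in establishing the existence of the crossing-change chain map (in \cite{ballinger2020concordance}, quoted as Proposition~\ref{elementaryprop}) and in the general monotonicity statement Lemma~\ref{boundingmap}. The only minor thing worth checking carefully is that the filtration conventions align, namely that ``filtered'' for $f$ in Proposition~\ref{elementaryprop} indeed means preserving both filtrations on $\CKh$ (so that no shift like the $-2g$ appearing in the cobordism case enters here), which is clear from the statement since a crossing change is a genus-zero operation.
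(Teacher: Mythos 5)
Your proposal is correct and matches the paper's proof exactly: both invoke Proposition~\ref{elementaryprop} to obtain a filtered chain map $\CKh(K_-) \to \CKh(K_+)$ inducing an isomorphism on homology, then conclude via Lemma~\ref{boundingmap}. The extra detail you supply about the collapsed filtration $C^\alpha_t$ being preserved is already implicit in the paper's one-line invocation of that lemma.
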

\begin{proof}
By Proposition~\ref{elementaryprop}, in this situation there is a filtered chain map $\CKh(K_-) \to \CKh(K_+)$ that induces an isomorphism on homology. The inequality then follows from Lemma~\ref{boundingmap}
\end{proof}

\section{Twisting ribbon surfaces and the slice genus bound}

Arguments similar to those of the previous section suffice to prove a slice genus bound $g_4(K) \ge \Phi_K(\alpha)/\alpha$, by considering the cobordism maps on $C(K)$ and the filtered maps they induce on $\CKh(K)$. This is identical to the bound stated in Theorem~\ref{slicegenus} when $0 \le \alpha \le 1$, but is significantly weaker when $1 \le \alpha \le 2$. To prove the stronger bound, more complex arguments are needed that involve modifying the knot to simplify it's Khovanov homology and using Proposition~\ref{crossingchange} to control the change in $\Phi$. 

Let $\Sigma \subset S^3$ be a ribbon surface of genus $g$, with a handle decomposition consisting of $k$ discs and $k + 2g - 1$ disjoint bands. Via slides among the bands, we may assume that the ribbon graph formed by these discs and bands has the same combinatorial type as the one visible in Figure~\ref{finalknot}. For $n \ge 0$, let $\Sigma_n$ be the surface obtained from $\Sigma$ by inserting $n$ full positive twists into each band of $\Sigma$, and let $K_n = \partial \Sigma_n$. 

\begin{prop}\label{twistcontrol}
For fixed $\alpha$, the quantity $\Phi_{K_n}(\alpha)$ is monotonically decreasing in $n$, and if $\alpha \neq 1$ then $\lim_{n \to \infty} \Phi_{K_n}(\alpha) = -2g\Lambda(\alpha)$. 
\end{prop}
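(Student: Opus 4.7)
My plan is to prove the proposition in three stages: monotonicity, the lower bound via the slice genus, and saturation of that bound as $n \to \infty$.

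\textbf{Monotonicity.} The knots $K_n$ and $K_{n+1}$ differ by inserting one additional full positive twist in each of the $k + 2g - 1$ bands of $\Sigma$. Each full twist contributes two crossings of the same sign; changing one of these crossings turns the pair into a Reidemeister-2 configuration and removes the twist, so a single crossing change per band takes $K_{n+1}$ to $K_n$. With the sign convention for positive twist fixed by Figure~\ref{finalknot}, the sign of these crossings is such that $K_{n+1}$ plays the role of $K_-$ and $K_n$ the role of $K_+$ at each crossing change. Iterating Proposition~\ref{crossingchange} over the $k + 2g - 1$ bands gives $\Phi_{K_{n+1}}(\alpha) \le \Phi_{K_n}(\alpha)$.

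\textbf{Lower bound.} Since $\Sigma_n$ is a surface of genus $g$ with $\partial \Sigma_n = K_n$, we have $g_4(K_n) \le g$, and Theorem~\ref{slicegenus} gives $\Phi_{K_n}(\alpha) \ge -2g\Lambda(\alpha)$. Combined with monotonicity, $\Phi_{K_n}(\alpha)$ converges to some limit $L(\alpha) \ge -2g\Lambda(\alpha)$.

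\textbf{Saturation.} It remains to show $L(\alpha) = -2g\Lambda(\alpha)$ for $\alpha \neq 1$. For this, I would construct, for each sufficiently large $n$, an explicit cycle $\gamma_n \in \CKh(K_n)$ representing the nonzero homology class whose bifiltration level $(i,j)$ satisfies $\alpha i + (2-\alpha) j \to -2g\Lambda(\alpha)$. The natural source is local: each twisted band contains a $(2,2n)$ tangle whose Khovanov complex has generators in progressively deeper bifiltration as $n$ grows, and these local contributions can be assembled along the gluing pattern dictated by Figure~\ref{finalknot}. The absence of symmetry about $\alpha = 1$ suggests that the regimes $\alpha < 1$ and $\alpha > 1$ require separate constructions, presumably driven by the Lee and $E(-1)$ parts of the differential respectively, as indicated by Figure~\ref{alloweddifferentials}.

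\textbf{Main obstacle.} The first two stages are essentially immediate from results already set up. The core difficulty is the saturation step. The obstacle there is to verify that the locally-constructed chains remain cycles and represent nonzero homology classes after global assembly. Because the differential on $\CKh(K_n)$ need not respect the second filtration, a candidate cycle in deep bifiltration could in principle be a boundary of a chain at much shallower bifiltration, so the construction must build $\gamma_n$ while simultaneously ruling out such spurious boundaries; this will likely require close analysis of the explicit model afforded by Lemma~\ref{fullsimplify}.
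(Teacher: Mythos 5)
Your monotonicity argument is correct and matches the paper's: each increment $K_n \to K_{n+1}$ can be realized by a positive-to-negative crossing change, so Proposition~\ref{crossingchange} applies.

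The rest of the proposal has a genuine gap, and your second stage also introduces a circularity. You invoke Theorem~\ref{slicegenus} to get the lower bound $\Phi_{K_n}(\alpha) \ge -2g\Lambda(\alpha)$, but in the paper Theorem~\ref{slicegenus} is \emph{deduced from} Proposition~\ref{twistcontrol}, so it cannot be used here. The only genus bound available at this point in the logical development is the weaker one mentioned just before the proposition, $|\Phi_K(\alpha)| \le 2g_4(K)\alpha$, which is strictly worse than $2g\Lambda(\alpha)$ on $(1,2]$. In any case the lower bound is not actually used in the paper's proof of the limit, so this is dispensable.

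The decisive gap is the saturation step, which you correctly flag as the main obstacle but do not prove. Your plan — build explicit cycles in deep bifiltration by splicing local generators from the twist regions, then verify they survive to homology — runs directly into the difficulty you yourself name: one must rule out that the candidate cycle is a boundary of a chain at much shallower filtration, and there is no obvious local-to-global argument for this. The paper circumvents this entirely. Rather than exhibiting a cycle realizing the value, it \emph{constrains the graph} of $\Phi_{K_n}$ using Theorem~\ref{homologyconstraint}: the key Lemma shows (via Willis's stable Khovanov homology of $K_\infty \subset \#^{k+2g-1} S^2 \times S^2$, the explicit isotopy of $K_\infty$ to the form in Figure~\ref{finalknot}, and spanning-tree bounds for a uniform delta-grading range) that $\operatorname{Kh}(K_n)$ is supported in delta grading $-2g$ in any fixed range of homological gradings once $n$ is large, while all other delta gradings only appear at homological gradings tending to infinity. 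Corollary~\ref{limitval} then deduces that for $|\alpha-1|>\epsilon$ the graph of $\Phi_{K_n}$ is forced onto the two lines through $(1,-2g)$ and the endpoints, i.e.\ $\Phi_{K_n}(\alpha) = -2g\Lambda(\alpha)$. To complete your proof you would need to supply this homological-constraint argument (or an explicit cycle construction of equivalent strength, which would likely be harder than what the paper does).
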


Theorem~\ref{slicegenus} follows quickly from this result, and the core of its proof involves understanding the delta graded Khovanov homology of $K_n$ for large $n$. Here and throughout this paper, the grading convention $\delta = q - 2h$ for the delta grading is used. 

\begin{lemma}
In any fixed homological grading, the Khovanov homology of $K_n$ is supported only in delta grading $-2g$ for sufficiently large $n$. Furthermore, there is a finite range of delta gradings that contains the entire support of the Khovanov homology of any $K_n$. 
\end{lemma}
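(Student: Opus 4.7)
The plan is to analyze the effect of inserting positive full twists into each band of the ribbon surface on the Khovanov complex of the resulting knot. Locally, each twisted band is the braid $\sigma_1^{2n}$, whose Khovanov complex stabilizes in a precise sense as $n$ grows, and combined with the tangle decomposition coming from the handle structure of $\Sigma$, this will give asymptotic control of the Khovanov homology of $K_n$.

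The main steps are as follows. First, decompose $K_n$ as the closure of a tangle obtained by composing a fixed disc-part tangle (depending only on the combinatorial pattern of $\Sigma$) with $k + 2g - 1$ copies of the twist tangle $\sigma_1^{2n}$. Second, invoke the stability of the twist tangle complex: up to chain homotopy over $\Q$, the Khovanov complex of $\sigma_1^{2n}$ splits into a stable piece supported on a single delta grading (coming from the through-strand Temperley--Lieb generator) plus an unstable piece of bounded rank whose homological support is pushed arbitrarily high as $n$ grows. Third, identify the stable piece after gluing into the rest of the diagram: it contributes a canonical class in the Khovanov homology of $K_n$ corresponding to the cobordism map induced by $\Sigma_n$. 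An Euler-characteristic calculation for the bigrading shift of this map places the class in delta grading $-2g$, so for $n$ large the only contribution to the Khovanov homology of $K_n$ in a fixed homological grading lies in delta grading $-2g$.

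For the uniform delta range, although the homological support of the unstable piece of each twist tangle complex drifts with $n$, its range of delta gradings is bounded, essentially because the twist complex has support on only two delta diagonals differing by a fixed shift. Gluing into the disc-part tangle preserves this bounded delta range, giving a uniform range of delta gradings for the Khovanov homology of $K_n$ that depends only on $\Sigma$. The main obstacle is the stabilization result for $\sigma_1^{2n}$, which is essentially a categorification of the $P_2$ Jones--Wenzl projector: one must show that the Khovanov complex of the twist tangle becomes indistinguishable from the through-strand piece in any fixed homological range, once $n$ is large. A direct approach via iterated skein exact sequences on a single crossing of a full twist, checking that the non-stable contribution always moves to strictly higher homological grading with each additional twist, should suffice if one prefers not to invoke the Jones--Wenzl categorification directly.
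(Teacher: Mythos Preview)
Your strategy via tangle stabilization is in the right spirit and is closely related to what the paper does: Willis's stabilization theorem, which the paper invokes, is essentially the statement that inserting many twists into each band converges to inserting categorified Jones--Wenzl projectors. The serious gap is in your third step. You correctly identify \emph{one} class in the stable part of $\operatorname{Kh}(K_n)$---the image of the cobordism map associated to $\Sigma_n$---and correctly place it at delta grading $-2g$ via the Euler characteristic. But the stable Khovanov homology in a fixed homological grading is not one-dimensional in general: it is the entire limit complex obtained by inserting projectors into each band and closing up through the disc-part tangle, and this complex still sees the embedding of $\Sigma$ (how the bands link one another and pass over the discs). Nothing in your argument forces all of these limit classes onto a single delta diagonal; even if each projector tangle is itself thin, tensoring with a fixed but possibly non-thin disc-part tangle need not produce a thin result. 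The paper fills this gap by interpreting the limit as the Khovanov homology of a knot $K_\infty$ in $\#^{k+2g-1}\,S^2\times S^1$ and then using handle slides in that manifold to undo all linking and ribbon crossings among the bands---this geometric simplification is the essential step your outline lacks---reducing $K_\infty$ to a standard form depending only on $g$, whose Khovanov homology agrees in each homological degree with that of the alternating knot $\#^g P(2n+1,2n+1,1)$ of signature $-2g$.

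Your argument for the uniform delta bound (that the twist tangle is supported on two delta diagonals and gluing into a fixed tangle preserves a bounded delta range) is plausible and could be made to work. The paper instead appeals to the spanning-tree/Turaev-genus bounds of Dasbach--Lowrance applied directly to the diagrams $D_n$: since adding a full twist to a band increases both the number of negative crossings and the number of all-$B$ circles by $2$ while leaving the positive crossings and all-$A$ circles unchanged, the resulting delta-grading bounds are independent of $n$.
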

\begin{proof}
First, to see that $\operatorname{Kh}(K_n)$ is homologically thin in small homological gradings for large $n$, consider the knot $K_\infty \subset M = \#^{k + 2g - 1} S^2 \times S^2$ formed by performing a $0$-surgery on a meridian of each of the bands of rhe ribbon surface $\Sigma$, and the Khovanov homology $\operatorname{Kh}(K_\infty)$ as defined by Willis in \cite{willis2019khovanov}. By Theorem~2.2 in \cite{willis2019khovanov}, for sufficiently large $n$ the Khovanov homologies $\operatorname{Kh}(K_n)$ and $\operatorname{Kh}(K_\infty)$ agree in any given homological degree. Therefore, it suffices to show that $\operatorname{Kh}(K_\infty)$ is supported in delta grading $-2g$. However, by isotopies involving sliding the knot $K_\infty$ over surgery curves for $M$, the bands of $\Sigma$ can be passed through eachother and the boundaries of the discs, so $K_\infty$ is isotopic to the knot shown in Figure~\ref{finalknot}, which in particular depends only on $g$ and not on the embedding of $\Sigma$. Using again the fact that $\operatorname{Kh}(K_\infty)$ agrees in any fixed homological grading with the result of replacing each $0$-surgery with a large but finite number of twists but in this new diagram, $\operatorname{Kh}(K_\infty)$ agrees with the Khovanov homology of the connected sum of pretzel knots $\#^g P(2n+1,2n+1,1)$ for large enough $n$ in any fixed homological grading. This knot is alternating and has signature $-2g$, so $\operatorname{Kh}(\#^g P(2n+1,2n+1,1))$ is supported in delta grading $-2g$ for any $n$ and the same is true of $\operatorname{Kh}(K_\infty)$.

To see that the homologies $\operatorname{Kh}(K_n)$ are supported in a uniformly bounded range of delta gradings, bounds based on the spanning tree complex for Khovanov homology suffice. $K_0$ has a diagram $D_0$ containing $k + 2g - 1$ distinguished negative crossings, corresponding to the bands in the handle decomposition of $\Sigma$, such that resolving these crossings produces an unlink with $k$ components and diagrams $D_n$ of each $K_n$ can be obtained from $D_0$ by replacing each of these crossings with $2n + 1$ negative crossings arranged in a row as in Figure~\ref{twistregion}. Let $n_-$, $n_+$, $O_A$, and $O_B$ be the number of negative crossings, positive crossings, circles in the all-A resolution, and circles in the all-B resolution of $D_0$. Then $D_n$ has $n_-+2n$ negative crossings, $n_+$ positive crossings, $O_A$ circles in the all-A resolution, and $O_B+2n$ in the all-B resolution. Therefore, the reduced Khovanov homologies of all of the $K_n$ are supported in delta gradings between $O_A - n_+ - 1$ and $-O_B + n_- + 1$, via the bounds proven in \cite{dasbach2011turaev} using a spanning tree complex. 

\end{proof}

\begin{figure}[h]
  \centering
  \def\svgwidth{\columnwidth}
\begingroup%
  \makeatletter%
  \providecommand\color[2][]{%
    \errmessage{(Inkscape) Color is used for the text in Inkscape, but the package 'color.sty' is not loaded}%
    \renewcommand\color[2][]{}%
  }%
  \providecommand\transparent[1]{%
    \errmessage{(Inkscape) Transparency is used (non-zero) for the text in Inkscape, but the package 'transparent.sty' is not loaded}%
    \renewcommand\transparent[1]{}%
  }%
  \providecommand\rotatebox[2]{#2}%
  \newcommand*\fsize{\dimexpr\f@size pt\relax}%
  \newcommand*\lineheight[1]{\fontsize{\fsize}{#1\fsize}\selectfont}%
  \ifx\svgwidth\undefined%
    \setlength{\unitlength}{358.72558594bp}%
    \ifx\svgscale\undefined%
      \relax%
    \else%
      \setlength{\unitlength}{\unitlength * \real{\svgscale}}%
    \fi%
  \else%
    \setlength{\unitlength}{\svgwidth}%
  \fi%
  \global\let\svgwidth\undefined%
  \global\let\svgscale\undefined%
  \makeatother%
  \begin{picture}(1,0.67596194)%
    \lineheight{1}%
    \setlength\tabcolsep{0pt}%
    \put(0,0){\includegraphics[width=\unitlength,page=1]{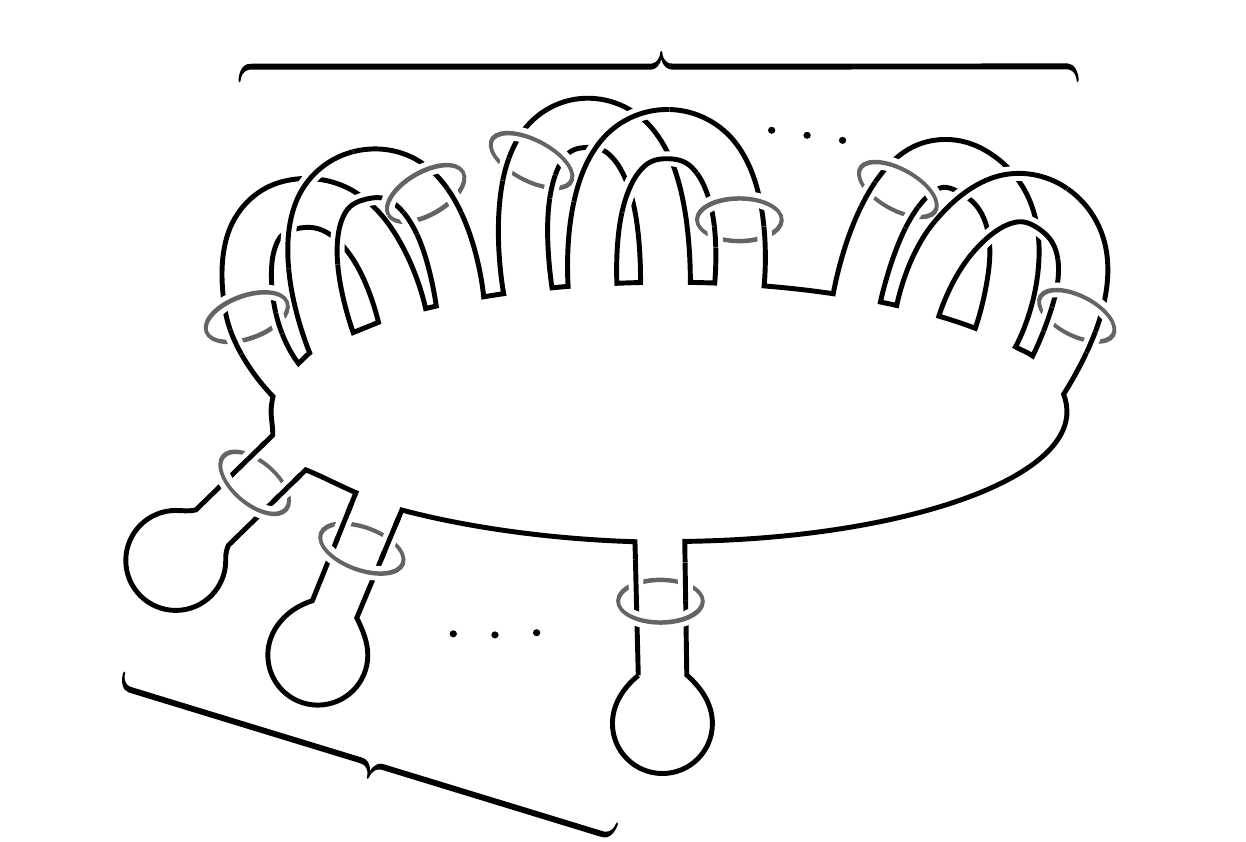}}%
    \put(0.53779879,0.64647045){\color[rgb]{0,0,0}\makebox(0,0)[lt]{\lineheight{1.25}\smash{\begin{tabular}[t]{l}$g$\end{tabular}}}}%
    \put(0.24365188,0.02377548){\color[rgb]{0,0,0}\makebox(0,0)[lt]{\lineheight{1.25}\smash{\begin{tabular}[t]{l}$k-1$\end{tabular}}}}%
  \end{picture}%
\endgroup%

  \caption{The general form to which $K_\infty$ can be isotoped. The black curve is the knot, and the grey curves are the ($0$-framed) surgery circles giving $S^2 \times S^1$.}\label{finalknot}
\end{figure}

\begin{figure}[h]
  \centering
  \def\svgwidth{\columnwidth}
\begingroup%
  \makeatletter%
  \providecommand\color[2][]{%
    \errmessage{(Inkscape) Color is used for the text in Inkscape, but the package 'color.sty' is not loaded}%
    \renewcommand\color[2][]{}%
  }%
  \providecommand\transparent[1]{%
    \errmessage{(Inkscape) Transparency is used (non-zero) for the text in Inkscape, but the package 'transparent.sty' is not loaded}%
    \renewcommand\transparent[1]{}%
  }%
  \providecommand\rotatebox[2]{#2}%
  \newcommand*\fsize{\dimexpr\f@size pt\relax}%
  \newcommand*\lineheight[1]{\fontsize{\fsize}{#1\fsize}\selectfont}%
  \ifx\svgwidth\undefined%
    \setlength{\unitlength}{358.72558594bp}%
    \ifx\svgscale\undefined%
      \relax%
    \else%
      \setlength{\unitlength}{\unitlength * \real{\svgscale}}%
    \fi%
  \else%
    \setlength{\unitlength}{\svgwidth}%
  \fi%
  \global\let\svgwidth\undefined%
  \global\let\svgscale\undefined%
  \makeatother%
  \begin{picture}(1,0.30125179)%
    \lineheight{1}%
    \setlength\tabcolsep{0pt}%
    \put(0,0){\includegraphics[width=\unitlength,page=1]{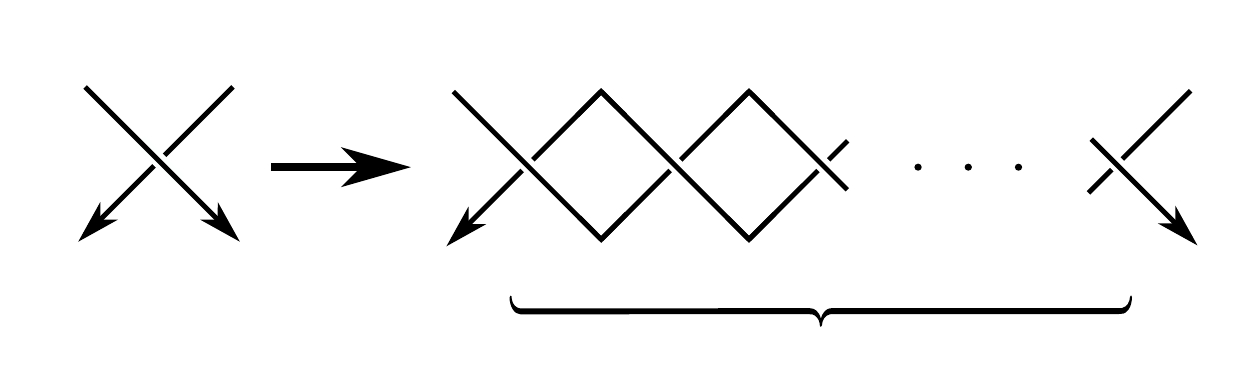}}%
    \put(0.6298338,0.00668231){\color[rgb]{0,0,0}\makebox(0,0)[lt]{\lineheight{1.25}\smash{\begin{tabular}[t]{l}$2n+1$\end{tabular}}}}%
  \end{picture}%
\endgroup%

  \caption{The local model for the twisting producing $K_n$ from $K = K_0$.}\label{twistregion}
\end{figure}

\begin{cor}\label{limitval}
For any $\epsilon > 0$, there is an $N$ for which, whenever $n \ge N$ and $|\alpha - 1| > \epsilon$, the invariant $\Phi_{K_n}(\alpha)$ is equal to $-2g\Lambda(\alpha)$.
\end{cor}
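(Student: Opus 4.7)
The plan is to combine the pointwise limit from Proposition~\ref{twistcontrol} with the structural constraint on the graph of $\Phi_{K_n}$ coming from Theorem~\ref{homologyconstraint} and the lemma just proven. Every linear piece of the graph must lie on a line through $(0,2h)$ and $(1,\delta)$ for some $(h,\delta)$ in the support of the reduced Khovanov homology of $K_n$; the lemma further tells us that such $(h,\delta)$ always satisfy $\delta \in [\delta_{\min}, \delta_{\max}]$ for a fixed interval independent of $n$, and that for each fixed $H > 0$ there is $N_H$ so that whenever $n \ge N_H$ and $|h| \le H$ one must have $\delta = -2g$.

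Separate the relevant lines into two kinds: call a line \emph{type A} if it comes from $|h| \le H$ (so $\delta = -2g$ and the line passes through the common point $(1,-2g)$), and \emph{type B} if it comes from $|h| > H$, in which case $|\ell_0|$ exceeds $2H$ while $\ell_1$ stays in the bounded interval $[\delta_{\min}, \delta_{\max}]$, forcing the slope $\ell_1 - 2h$ to have large magnitude. By monotonicity in $n$, $\Phi_{K_n}$ is sandwiched between $\Phi_{K_0}$ and $-2g\Lambda(\alpha)$, so its graph stays in a fixed horizontal strip. A direct calculation shows that if $H$ is chosen sufficiently large in terms of $\epsilon$, $g$, $\delta_{\min}$, $\delta_{\max}$, and $\sup|\Phi_{K_0}|$, then every type B line exits this strip before entering the region $|\alpha - 1| < \epsilon$. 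Consequently, for $n \ge N_H$ no type B line contributes a segment to the graph of $\Phi_{K_n}$ on the set $\{|\alpha - 1| \ge \epsilon\}$.

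With type B eliminated there, the graph on $[0, 1-\epsilon]$ lies entirely on type A lines, all of which pass through $(1, -2g)$. The leftmost segment must also pass through $(0, \Phi_{K_n}(0)) = (0, 0)$, and the unique type A line through $(0,0)$ and $(1,-2g)$ is $y = -2g\alpha$. If the graph were to leave this line at some $\alpha_L \in (0, 1-\epsilon]$, the next segment would lie on another type A line through $(\alpha_L, -2g\alpha_L)$ and $(1, -2g)$; but that line has slope $\frac{-2g - (-2g\alpha_L)}{1 - \alpha_L} = -2g$ and so coincides with $y = -2g\alpha$, a contradiction. Therefore $\Phi_{K_n}(\alpha) = -2g\alpha$ throughout $[0, 1-\epsilon]$. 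The identical argument applied at the rightmost segment, which ends at $(2,0)$, yields $\Phi_{K_n}(\alpha) = -2g(2-\alpha)$ on $[1+\epsilon, 2]$; together these two identities are exactly $\Phi_{K_n}(\alpha) = -2g\Lambda(\alpha)$ for $|\alpha - 1| \ge \epsilon$.

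The main obstacle is the quantitative elimination of type B lines: one has to make the relationship between $H$ and $\epsilon$ explicit enough that the large-slope type B lines really cannot dip into the bounded strip containing the graph outside a small neighborhood of $\alpha = 1$, and then invoke the lemma to find $N$ such that the reduced Khovanov homology of $K_n$ in homological gradings $|h| \le H$ is supported only in $\delta = -2g$ for all $n \ge N$. Everything else is driven by the rigid geometry that all type A lines fan out from the single point $(1,-2g)$, forcing the graph on both flanks of this point to be the two straight segments of the V-shape $-2g\Lambda(\alpha)$.
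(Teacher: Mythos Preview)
Your approach is essentially the same as the paper's: separate the supporting lines from Theorem~\ref{homologyconstraint} into ``type A'' lines through $(1,-2g)$ and steep ``type B'' lines, rule out type B on $|\alpha-1|\ge\epsilon$, and then use that all type A lines meet at $(1,-2g)$ to pin the graph to $-2g\Lambda(\alpha)$ on the two flanks.

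There is, however, a circularity. You write ``By monotonicity in $n$, $\Phi_{K_n}$ is sandwiched between $\Phi_{K_0}$ and $-2g\Lambda(\alpha)$.'' Monotonicity (from crossing changes) only gives the upper bound $\Phi_{K_n}\le\Phi_{K_0}$; the lower bound $\Phi_{K_n}\ge -2g\Lambda(\alpha)$ is the slice genus bound of Theorem~\ref{slicegenus}, whose proof goes through Proposition~\ref{twistcontrol}, whose proof in turn invokes this very corollary. (Your opening sentence also cites ``the pointwise limit from Proposition~\ref{twistcontrol},'' which is likewise logically downstream of what you are proving.) The repair is easy: replace the lower bound by the weaker one $|\Phi_{K_n}(\alpha)|\le 2g\alpha$, which follows directly from the cobordism maps in Proposition~\ref{elementaryprop} (each $K_n$ bounds $\Sigma_n$ of genus $g$) and is already available at this point. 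Alternatively, drop the horizontal-strip device entirely and argue as the paper does: since sufficiently steep type B lines miss $(0,0)$ and $(2,0)$, the initial segment at each endpoint must be type A, hence is $y=-2g\alpha$ (resp.\ $y=-2g(2-\alpha)$); this line can only be abandoned for a type B line, and the intersection of the initial line with any type B line is forced into an arbitrarily small neighborhood of $\alpha=1$ once $H$ is large.
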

\begin{proof}
By the previous Lemma, there is a constant $M$ for which the reduced Khovanov homologies of the knots $K_n$ are supported in delta gradings between $-M$ and $M$ and, by taking $n$ large enough, can be taken to be supported in delta grading $-2g$ in any finite range of homological gradings.

By Theorem~\ref{homologyconstraint}, then, the graphs of $\Phi_{K_n}$ for sufficiently large $n$ are contained in the a union of lines that all either pass through $(1,-2g)$ or meet the line $\alpha = 1$ between $-M$ and $M$ and have slope as large as desired. A sufficiently steep line meeting $\alpha = 1$ between $-M$ and $M$ cannot pass through $(0,0)$ or $(2,0)$, so since $\Phi_{K_n}(0) = \Phi_{K_n}(2) = 0$, near the endpoints the graph of $\Phi_{K_n}$ must follow either the line $(\alpha, -2g\alpha)$ or $(\alpha, -2g(2-\alpha))$. Since $\Phi_{K_n}$ is continuous, the graph must follow these lines at least until they meet another one of the supporting lines. If this happens for at a point with $\alpha \neq 1$, it must be because one of the initial lines meets one of the very steep lines. By taking these additional lines steep enough, this can be restricted to happen only in a small neighborhood of $\alpha = 1$. 
\end{proof}

\begin{proof}[Proof of Proposition~\ref{twistcontrol}]
$K_{n+1}$ is obtained from $K_n$ by changing a positive crossing to a negative one (since the two strands in a band are oppositely oriented) so Proposition~\ref{crossingchange} shows that $\Phi_{K_n}(\alpha)$ is monotonically decreasing, and Corollary~\ref{limitval} shows that $\Phi_{K_n}(\alpha)$ is eventually equal to $-2g\Lambda(\alpha)$ if $\alpha \neq 1$.
\end{proof}

\begin{proof}[Proof of Theorem~\ref{slicegenus}]
If $K$ has slice genus $g$, then it is concordant to a knot bounding a ribbon surface of genus $g$, so since $\Phi$ is a concordance invariant we may assume that $K$ bounds a ribbon surface of genus $g$. With $K_n$ as in Proposition~\ref{twistcontrol}, then, for any $\alpha \neq 1$ $$\Phi_K(\alpha) = \Phi_{K_0}(\alpha) \ge \lim_{n \to \infty} \Phi_{K_n}(\alpha) = -2g\Lambda(\alpha),$$
and the same must be true for $\alpha = 1$ since $\Phi_K$ is a continuous function of $\alpha$. The theorem now follows from applying this inequality to both $K$ and $-K$. 
\end{proof}

\section{Pretzel calculations and a conjecture on torus knots}

\begin{prop}\label{pretzelhomology}
If $K$ is the pretzel knot $P(-2n,a,b)$ with $a$ and $b$ odd and $0 < 2n < a,b$, then the only singularities of $\Phi_{K}(\alpha)$ are at $\alpha = 1$ and $\alpha = \frac{8n}{8n+2}$. The change in the slope of $\Phi_K(\alpha)$ at $\alpha = \frac{8n}{8n+2}$ is exactly
\begin{equation*}
\Delta\Phi'_K\left(\frac{8n}{8n+2}\right) = -8n-2
\end{equation*}
\end{prop}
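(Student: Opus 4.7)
The plan is to read off $\Phi_K$ directly from the reduced Khovanov homology of $P(-2n,a,b)$ computed by Manion in \cite{manion2018khovanov}, combining Theorem~\ref{homologyconstraint} with the endpoint conditions $\Phi_K(0)=\Phi_K(2)=0$ and the slice genus bound of Theorem~\ref{slicegenus}.

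By Theorem~\ref{homologyconstraint}, each maximal linear piece of the graph of $\Phi_K$ lies along a line of the form
\[
y = 2h(1-\alpha) + \delta\alpha,
\]
of slope $\delta-2h$, where $(h,\delta)$ is a bidegree supporting $\operatorname{Kh}(K)$. Two such lines associated to $(h_1,\delta_1)$ and $(h_2,\delta_2)$ intersect at $\alpha = 2(h_2-h_1)/(2(h_2-h_1)+\delta_1-\delta_2)$. Setting this equal to $\tfrac{8n}{8n+2}$ forces $h_2-h_1 = 4nk$ and $\delta_1-\delta_2 = 2k$ for some positive integer $k$, in which case the slope change is $(\delta_2-2h_2)-(\delta_1-2h_1) = -(8n+2)k$. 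So the content of the proposition is that the transition at $\alpha = \tfrac{8n}{8n+2}$ is realized by generators with $k=1$ and that no other interior singularity occurs.

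To identify the relevant lines, I would exploit that $\Phi_K(0)=0$ forces the first linear piece to be supported by a Khovanov generator in homological grading $0$, while $\Phi_K(2)=0$ forces the last piece to come from a generator with internal grading $0$, equivalently $\delta = h$. Reading Manion's tables for $\operatorname{Kh}(P(-2n,a,b))$, I would pick out the unique $(h,\delta)$-pair realizing each endpoint condition and the unique pair realizing the interior transition with $h_2-h_1=4n$, $\delta_1-\delta_2 = 2$, then verify that the three resulting candidate lines glue continuously at $\alpha = \tfrac{8n}{8n+2}$ and $\alpha = 1$. The slice genus bound of Theorem~\ref{slicegenus}, applied together with the genus-$n$ ribbon surface built from the $-2n$ twist band of $P(-2n,a,b)$, rules out spuriously steep lines near the endpoints.

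The main obstacle is this last combinatorial verification: since $\Phi_K$ is the maximum of piecewise linear functions coming from individual cycles, one has to confirm from Manion's explicit description that no other bidegree in $\operatorname{Kh}(P(-2n,a,b))$ produces a linear segment lying above the proposed $\Phi_K$ or introduces a further singularity away from $\{\tfrac{8n}{8n+2},\, 1\}$. Because the symmetry $\Upsilon_K(t)=\Upsilon_K(2-t)$ has no analogue here, this bookkeeping must be done separately on $[0,1]$ and on $[1,2]$; ruling out extra singularities on the right-hand interval, where only $\alpha = 1$ itself is allowed, is the most delicate part of the argument.
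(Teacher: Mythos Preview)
Your approach has a genuine gap: Theorem~\ref{homologyconstraint} is a \emph{necessary} condition on the linear pieces of $\Phi_K$, not a sufficient one, and together with the endpoint values and the slice genus bound it does not pin down $\Phi_K$. Concretely, Manion's computation shows that the reduced Khovanov homology of $K$ is supported in exactly two delta gradings, $a+b$ and $a+b-2$, the first spanning homological degrees $0$ through $4n+1$ and the second spanning degrees $\ge 4n$. The line through $(0,a+b)$ and the line through $(h_2, a+b-2)$ meet at $\alpha = h_2/(h_2+1)$, so \emph{every} $h_2 \ge 4n$ produces a continuous piecewise-linear candidate for $\Phi_K$ on $[0,1]$ that is consistent with Theorem~\ref{homologyconstraint} and with $\Phi_K(0)=0$, each with its interior singularity at a different point. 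Nothing in your outline distinguishes $h_2 = 4n$ from $h_2 = 4n+1$; your derivation that ``$h_2-h_1=4nk$'' already assumes the singularity sits at $8n/(8n+2)$, which is what is to be proved. The slice genus bound does not help either: it bounds $|\Phi_K|$ from above and so can only exclude candidates, not select among those that remain (and in any case these knots have $s(K)=a+b$, so $g_4(K)\ge (a+b)/2 > n$, and no genus-$n$ ribbon surface exists).

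What the paper does, and what your outline is missing, is to determine the \emph{differential} on the model of $\CKh(K)$ provided by Lemma~\ref{fullsimplify}, not just the bigraded support. Because the homology is thin (two adjacent $\delta$-gradings) and the total homology is one-dimensional, the Lee and $E(-1)$ differentials are each forced, there is no room for deeper terms, and the complex splits as a single staircase plus contractible boxes. It is this explicit staircase that singles out the generator at $(4n,\,a+b-2)$ as carrying the middle segment and hence fixes the singularity at $4n/(4n+1)=8n/(8n+2)$. The ``combinatorial verification'' you flag is therefore not a bookkeeping check on Manion's tables but a determination of the chain-level structure of $\CKh(K)$, and it cannot be replaced by Theorem~\ref{homologyconstraint}.
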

\begin{proof}
By Manion's computations of the reduced Khovanov homology of pretzel knots \cite{manion2018khovanov}, the reduced homology of $K$ is supported in delta gradings $a+b$ and $a+b - 2$ (note that here we use the convention $\delta = q-2h$, which differs from Manion's by a factor of $2$). In the higher delta grading the homology is supported between homological gradings $0$ and $4n+1$, and in the lower delta grading the homology is supported in homological gradings at least $4n$. Furthermore, the homology group in delta grading $a+b$ and homological grading $4n+1$ has rank $1$, as does the homology group in delta grading $a+b-2$ and homological grading $4n$. Using the model provided by Lemma~\ref{fullsimplify}, this is enough to fully compute $\CKh(K)$. For each of the Lee and $E(-1)$ differentials, there is only one possibility consistent with the facts that the overall homology is one dimensional and the surviving generator is in homological or internal grading $0$. There is no room in the reduced Khovanov homology of $K$ for any differentials going deeper into both filtrations, and only one way up to isomorphism of the resulting complex to match the generators of the Lee and $E(-1)$ complexes to make $d^2 = 0$, so the differential on $\CKh(K)$ is just the sum of the Lee and $E(-1)$ differentials. Overall, the complex $\CKh(K)$ is shown in figure~\ref{pretzelcomp}; it is the direct sum of one main staircase complex with copies of shifts of the boxed contractible complex. The boxed complex is homotopy equivalent to $0$, so in fact $\CKh(K)$ is homotopy equivalent to the main complex $C$ in Figure~\ref{pretzelcomp}. This means that $\Phi_K(\alpha) = M(C,\alpha)$, which can be computed to be
\begin{equation*}
\Phi_K(\alpha) = \begin{cases}
(a+b)\alpha & 0 \le \alpha \le \frac{8n}{8n+2} \\
8n + (a+b-8n-2)\alpha & \frac{8n}{8n+2} \le \alpha \le 1 \\
(a+b-2)(2-\alpha) & 1 \le \alpha \le 2
\end{cases}
\end{equation*}
\end{proof}

\begin{lemma}\label{deltaintegral}
For any knot $K$, the quantity $\Delta \Phi'_K(\alpha)(1-\alpha)$ is always an even integer. 
\end{lemma}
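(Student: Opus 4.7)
The plan is to read off the statement directly from Theorem~\ref{homologyconstraint}, which says that every maximal linear piece of the graph of $\Phi_K$ lies on a line $L$ passing through $(0,\ell_0)$ and $(1,\ell_1)$ with $\ell_0,\ell_1 \in 2\Z$. Since $\Phi_K$ is continuous and piecewise linear with finitely many breakpoints, the jump $\Delta\Phi'_K(\alpha)$ vanishes away from these breakpoints and the statement is automatic there, so the content lies entirely at each breakpoint.

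At a breakpoint $\alpha_*$, two distinct lines $L$ and $L'$ meet, with data $(\ell_0,\ell_1)$ and $(\ell_0',\ell_1')$ respectively. I would set $a = \ell_0' - \ell_0$ and $b = \ell_1' - \ell_1$, both even by Theorem~\ref{homologyconstraint}. The slopes of $L$ and $L'$ are $\ell_1-\ell_0$ and $\ell_1'-\ell_0'$, so $\Delta\Phi'_K(\alpha_*) = b-a$. Solving $\ell_0 + (\ell_1-\ell_0)\alpha = \ell_0' + (\ell_1'-\ell_0')\alpha$ gives $\alpha_* = a/(a-b)$, hence $1-\alpha_* = b/(b-a)$, and the factor $b-a$ cancels in the product:
\[
\Delta\Phi'_K(\alpha_*)\,(1-\alpha_*) \;=\; (b-a) \cdot \frac{b}{b-a} \;=\; b \;=\; \ell_1' - \ell_1,
\]
which is a difference of even integers and hence even. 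This also gives the quantity a clean geometric interpretation: it is the vertical jump between the two lines $L$ and $L'$ measured along the axis $\alpha = 1$.

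The only point requiring care is the degenerate case $a=b$, where $L$ and $L'$ would have equal slope; then they either coincide (so $\alpha_*$ is not a genuine breakpoint) or are parallel and disjoint (so they cannot meet at all), so this case does not contribute. I do not foresee any real obstacle: once Theorem~\ref{homologyconstraint} is invoked the lemma reduces to the short Euclidean computation above, and the parity conclusion is forced by the parity of $\ell_1$ and $\ell_1'$.
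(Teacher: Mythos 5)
Your proof is correct and follows essentially the same route as the paper's: both reduce the statement to Theorem~\ref{homologyconstraint} and the observation that $\Delta\Phi'_K(\alpha_*)(1-\alpha_*)$ equals the difference $\ell_1'-\ell_1$ of two delta gradings supporting reduced Khovanov homology, each of which is even. The paper merely asserts this identification without the short algebraic computation you supply, so your write-up is a slightly more explicit version of the same argument.
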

\begin{proof}
By Theorem~\ref{homologyconstraint}, this is a difference between two delta gradings in which the reduced Khovanov homology of $K$ is supported. The parities of the delta and quantum gradings are equal, and the reduced Khovanov homology of a knot is supported in even quantum gradings. 
\end{proof}

\begin{proof}[Proof of Theorem~\ref{pretzelindep}]
For $n \ge 0$ an integer, let $$i_n(K) = \frac{\Delta\Phi'_K\left(\frac{8n}{8n+2}\right)}{8n+2}.$$ By Lemma~\ref{deltaintegral}, this is always an integer, and since $i_n$ depends linearly on $\Phi$ it is a concordance homomorphism. Finally, by Proposition~\ref{pretzelhomology}, $i_n(P(-2k,a,b)) = -1$ if $n = k$ and is zero otherwise. 
\end{proof}

\begin{figure}[h]
  \centering
  \def\svgwidth{\columnwidth}
\begingroup%
  \makeatletter%
  \providecommand\color[2][]{%
    \errmessage{(Inkscape) Color is used for the text in Inkscape, but the package 'color.sty' is not loaded}%
    \renewcommand\color[2][]{}%
  }%
  \providecommand\transparent[1]{%
    \errmessage{(Inkscape) Transparency is used (non-zero) for the text in Inkscape, but the package 'transparent.sty' is not loaded}%
    \renewcommand\transparent[1]{}%
  }%
  \providecommand\rotatebox[2]{#2}%
  \newcommand*\fsize{\dimexpr\f@size pt\relax}%
  \newcommand*\lineheight[1]{\fontsize{\fsize}{#1\fsize}\selectfont}%
  \ifx\svgwidth\undefined%
    \setlength{\unitlength}{358.72558594bp}%
    \ifx\svgscale\undefined%
      \relax%
    \else%
      \setlength{\unitlength}{\unitlength * \real{\svgscale}}%
    \fi%
  \else%
    \setlength{\unitlength}{\svgwidth}%
  \fi%
  \global\let\svgwidth\undefined%
  \global\let\svgscale\undefined%
  \makeatother%
  \begin{picture}(1,1.00355262)%
    \lineheight{1}%
    \setlength\tabcolsep{0pt}%
    \put(0.90349598,0.8572012){\color[rgb]{0,0,0}\makebox(0,0)[lt]{\lineheight{1.25}\smash{\begin{tabular}[t]{l}$q^{3a+3b-4}h^{a+b-1}$\end{tabular}}}}%
    \put(0,0){\includegraphics[width=\unitlength,page=1]{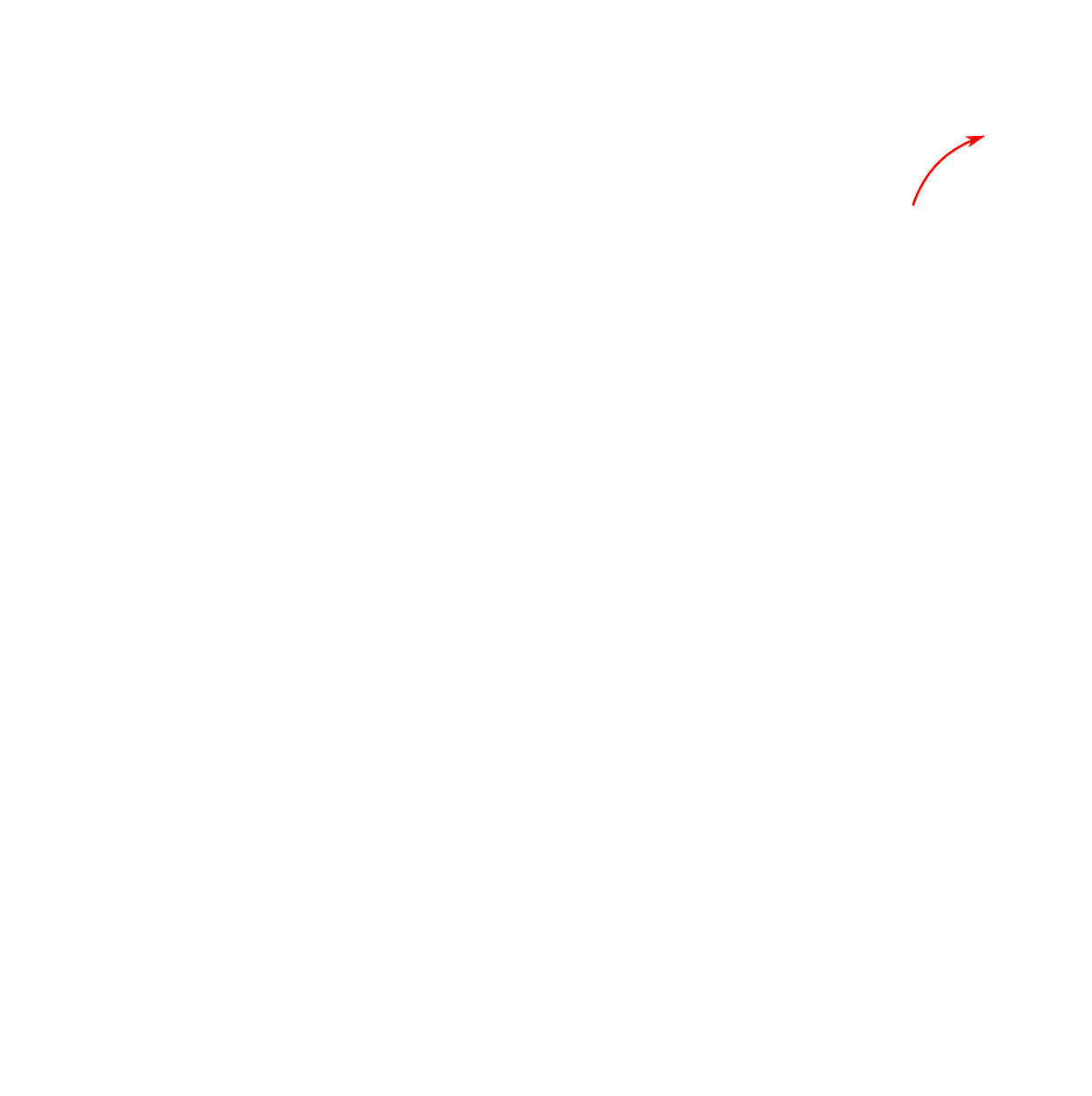}}%
    \put(0.02090735,0.0418147){\color[rgb]{0,0,0}\makebox(0,0)[lt]{\lineheight{1.25}\smash{\begin{tabular}[t]{l}$q^{a+b}$\end{tabular}}}}%
    \put(0,0){\includegraphics[width=\unitlength,page=2]{pretzelcomplex.pdf}}%
    \put(0.36396822,0.60663136){\color[rgb]{0,0,0}\makebox(0,0)[lt]{\lineheight{1.25}\smash{\begin{tabular}[t]{l}$q^{a+b+8n+2}h^{4n+1}$\end{tabular}}}}%
    \put(0.49206934,0.44950795){\color[rgb]{0,0,0}\makebox(0,0)[lt]{\lineheight{1.25}\smash{\begin{tabular}[t]{l}$q^{a+b+8n-2}h^{4n}$\end{tabular}}}}%
    \put(0,0){\includegraphics[width=\unitlength,page=3]{pretzelcomplex.pdf}}%
    \put(0.64812775,0.82584018){\color[rgb]{0,0,0}\makebox(0,0)[lt]{\lineheight{1.25}\smash{\begin{tabular}[t]{l}$q^{3a+3b-6}h^{a+b-2}$\end{tabular}}}}%
    \put(0,0){\includegraphics[width=\unitlength,page=4]{pretzelcomplex.pdf}}%
  \end{picture}%
\endgroup%

  \caption{The complex $\CKh(P(-2n,a,b))$ is the direct sum of the larger complex in this figure with several shifts of the smaller, boxed complex. All arrows shown are terms in the diferential of $\CKh(P(-2n,a,b))$, with the ones in blue coming from the $E(-1)$ spectral sequence and the ones in red coming from the Lee spectral sequence. Some gradings in the larger complex are given by the exponents of $q$ and $h$ (the standard quantum and homological gradings), and the rest may be inferred from the fact that all $E(-1)$ differentials present have degree $q^6h^2$ and, apart from the one between gradings $q^{a+b+8n-2}h^{4n}$ and $q^{a+b+8n+2}h^{4n+1}$, the Lee differentials have degree $q^2h$. }\label{pretzelcomp}
\end{figure}

Similar techniques may be used to compute the invariant $\Phi$ for individual small knots, although not yet for any other infinte family of non-thin knots. However, looking at the values of $\Phi$ on small torus knots, a clear pattern emerges leading to a conjecture for the value of $\Phi$ on all torus knots:
\begin{conj}\label{torusvalues}
For the torus knot $T_{n,n+1}$, the jump in the slope $\Delta \Phi'_{T_{n,n+1}}(\alpha)$ is equal to $0$ except when $\alpha = \frac{2+2k}{2+3k}$ for $0 \le k < n$ and $k$ of the same parity as $n$, when it is $-4-6k$. Together with the fact that $s(T_{n,n+1}) = n(n-1)$, this determines $\Phi_{T_{n,n-1}}$. For other torus knots, $\Phi$ is determined by the recurrence
\begin{equation*}
\Phi_{T_{p,q+p}} = \Phi_{T_{p,q}} + \Phi_{T_{p,p+1}}
\end{equation*}
\end{conj}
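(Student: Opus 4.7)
Via Lemma~\ref{fullsimplify}, it suffices to pin down the semibifiltered homotopy type of $\CKh(T_{p,q})$ well enough to evaluate $M(\CKh(T_{p,q}),\alpha)$ in every direction $\alpha$. The underlying bigraded vector space is the rational reduced Khovanov homology of $T_{p,q}$, which is known explicitly (Sto\v{s}i\'c, Turner, Hogancamp), and the two pieces of residual data to identify are the Lee piece of the differential (internal degree $-2$) and the $E(-1)$ piece (internal degree $-6$, homological degree $+2$), as indicated by the red and blue regions of Figure~\ref{alloweddifferentials}. The Lee piece is essentially governed by Rasmussen's analysis in \cite{rasmussen2010khovanov}; the $E(-1)$ piece is the new content.

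\textbf{The base case $T_{n,n+1}$.} I would imitate the pretzel calculation in Proposition~\ref{pretzelhomology}. First use $s(T_{n,n+1})=n(n-1)$ and the analogue $-t(T_{n,n+1})$ to fix the endpoint slopes, then identify a ``main staircase'' in $\operatorname{Kh}(T_{n,n+1})$ whose corners account for all the conjectured slope jumps, splitting off the rest as contractible pieces. The predicted corner positions $\alpha = \tfrac{2+2k}{2+3k}$ with jump $-4-6k$ are exactly those one gets if consecutive corners are joined alternately by Lee arrows (internal/homological shift $(2,1)$) and $E(-1)$ arrows (shift $(6,3)$), with $k$ of the same parity as $n$ because the parity must match the parity of the internal grading of the surviving Lee generator. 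I would then induct on $n$ using the standard unoriented skein exact triangle on a crossing of $T_{n,n+1}$, tracking how Lee and $E(-1)$ terms behave; this resembles Rasmussen's inductive computation of $s$ and should control both differentials simultaneously.

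\textbf{The recurrence.} For $q\ge 1$, the braid $\beta(p,q+p)$ is $\beta(p,q)$ composed with the full twist $\operatorname{FT}_p$ on $p$ strands. The plan is to show that the full twist acts on the Bar-Natan/Khovanov picture by a projector-like operator (Hogancamp, Elias--Hogancamp) whose effect on the bifiltered homotopy type of $\CKh$ is to tensor (over $\Q$) with $\CKh(T_{p,p+1})$, up to contractible summands that do not contribute to $M$. More concretely, I would look for a filtered chain map
\begin{equation*}
\CKh(T_{p,q})\otimes_\Q \CKh(T_{p,p+1}) \longrightarrow \CKh(T_{p,q+p})
\end{equation*}
inducing an isomorphism on homology, so that Lemma~\ref{additivity} and Lemma~\ref{boundingmap} in both directions give the desired equality $\Phi_{T_{p,q+p}}=\Phi_{T_{p,q}}+\Phi_{T_{p,p+1}}$.

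\textbf{The main obstacle.} The hard part is controlling the $E(-1)$ differential on torus knot homology: while the Khovanov chain complex is well studied, the $E(-1)$ perturbation has not been computed, and Figure~\ref{alloweddifferentials} a priori allows many more terms than the conjectured staircase. Ruling them out seems to require an honest structural input, either a Hogancamp-style idempotent decomposition of the full-twist complex that is compatible with both the Lee and $E(-1)$ differentials simultaneously, or a clever use of skein exact triangles combined with knowledge of all $T_{p,q}$ of smaller $q$. Likewise, the recurrence step is delicate because $\CKh$ is not multiplicative on the nose under braid composition, so making the proposed map filtered will require explicit chain-level representatives of the staircase generators, precisely the thing one expects to be combinatorially intricate for general $p,q$.
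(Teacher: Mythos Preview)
The statement you are attempting to prove is labeled a \emph{conjecture} in the paper, and the paper does not contain a proof of it. The only evidence the paper offers is computational: verification for small $p,q$ using the tables in \cite{bar2005knot}, verification for $p=2,3$ and all $q$ using the known Khovanov homologies of those torus knots (via \cite{turner2008spectral} for $p=3$), and consistency with the values $t(T_{p,q})=-\Phi'_{T_{p,q}}(2)$ computed in \cite{ballinger2020concordance}. There is therefore no ``paper's own proof'' to compare against; any argument you supply would be new mathematics going beyond the paper.

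As for the substance of your outline: the strategy is plausible in shape but, as you yourself flag, the genuine content is missing. Two specific points. First, your proposed induction on $n$ via a skein triangle does not obviously control the $E(-1)$ piece of the differential; the skein triangle is a statement about ordinary Khovanov homology, and lifting it to a statement compatible with the full semibifiltered structure of $\CKh$ is exactly the kind of structural input the paper does not provide. Second, for the recurrence, a single filtered map $\CKh(T_{p,q})\otimes\CKh(T_{p,p+1})\to\CKh(T_{p,q+p})$ inducing an isomorphism on homology only gives an inequality $\Phi_{T_{p,q}}+\Phi_{T_{p,p+1}}\le\Phi_{T_{p,q+p}}$ via Lemma~\ref{boundingmap}; to get equality you would need a filtered map the other way as well, and there is no evident candidate. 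The paper explicitly notes that the analogous recurrence for $\Upsilon$ is known \cite{feller2017cobordisms} but that explaining why $\Phi$ should satisfy it ``would be an interesting exploration,'' which is a signal that the author regards this as open.
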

This conjecture can be confirmed for small $p,q$ using computer calculations of $\operatorname{Kh}(T_{p,q})$ from \cite{bar2005knot}, and similarly for $p = 2,3$ and any $q$ since the Khovanov homologies of these knots are known (see \cite{turner2008spectral} for the $(3,q)$ case). It is also consistent with the calculations of $t(T_{p,q}) = -\Phi'_{T_{p,q}}(2)$ from \cite{ballinger2020concordance}. The conjectured recurrence relation $\Phi_{T_{p,q+p}} = \Phi_{T_{p,q}} + \Phi_{T_{p,p+1}}$ is known to be satisfied by the Upsilon invariant \cite{feller2017cobordisms}, but the values of the two invariants on $T_{n,n+1}$ are different. Explaining the recurrence satisfied by both $\Upsilon$ and (conjecturally) $\Phi$ would be an interesting exploration into the relationships between knot Floer homology and Khovanov homology.

\bibliographystyle{amsplain}
\bibliography{filteredcomp}

\end{document}